\newtheorem{thm}{Theorem}[section]
\newtheorem{pro}[thm]{Proposition}%[section]
\newtheorem{cor}[thm]{Corollary}%[section]
\newtheorem{lem}[thm]{Lemma}%[section]
\newtheorem*{opq*}{\bf Problem}
\theoremstyle{remark}
\newtheorem{rem}[thm]{Remark}%[section]
\theoremstyle{definition}
\newtheorem{dfn}[thm]{Definition}%[section]
\newtheorem{exa}[thm]{Example}
\newcommand*{\ascr}{\mathscr A}
\newcommand*{\cbb}{\mathbb C}
\newcommand*{\cfw}{C_{\phi, \mathsf w}}
\newcommand*{\efw}{\mathsf{E}_{\phi,\mathsf w}}
\newcommand*{\Ge}{\geqslant}
\newcommand*{\hfw}{h_{\phi, \mathsf w}}
\newcommand*{\hh}{\mathcal H}
\newcommand*{\Le}{\leqslant}
\newcommand*{\ogr}[1]{\boldsymbol B(#1)}
\newcommand*{\zbb}{\mathbb Z}
\begin{document}
   \title[The Cauchy dual subnormality problem for cyclic
$2$-isometries] {The Cauchy dual subnormality problem
\\ for cyclic $2$-isometries}
   \author[A. Anand]{Akash Anand}
   \address{Department of Mathematics and Statistics\\
Indian Institute of Technology Kanpur, India}
   \email{akasha@iitk.ac.in}
   \author[S. Chavan]{Sameer Chavan}
   \address{Department of Mathematics and Statistics\\
Indian Institute of Technology Kanpur, India}
   \email{chavan@iitk.ac.in}
   \author[Z.\ J.\ Jab{\l}o\'nski]{Zenon Jan
Jab{\l}o\'nski}
   \address{Instytut Matematyki,
Uniwersytet Jagiello\'nski, ul.\ \L ojasiewicza 6,
PL-30348 Kra\-k\'ow, Poland}
\email{Zenon.Jablonski@im.uj.edu.pl}
   \author[J.\ Stochel]{Jan Stochel
   \\\\
{\em \normalsize{Dedicated to Professor Franciszek
Hugon Szafraniec \\ on the occasion of his 80th
birthday}}}
   \address{Instytut Matematyki, Uniwersytet
Jagiello\'nski, ul.\ \L ojasiewicza 6, PL-30348
Kra\-k\'ow, Poland} \email{Jan.Stochel@im.uj.edu.pl}
   \subjclass[2010]{Primary 47B20, 47B37 Secondary
44A60} \keywords{Cauchy dual operator, $2$-isometry,
subnormal operator, weighted composition operator}
   \begin{abstract}
The Cauchy dual subnormality problem asks whether the
Cauchy dual operator of a $2$-isometry is subnormal.
Recently this problem has been solved in the negative.
Here we show that it has a negative solution even in
the class of cyclic $2$-isometries.
   \end{abstract}

   \maketitle

   \section{Introduction}
Very recently the Cauchy dual subnormality problem was
solved negatively in the class of $2$-isometric
operators (see \cite{ACJS-1}). Originally formulated
for completely hyperexpansive operators (see
\cite[Question~2.11]{Ch-0}), the problem entails on
determining whether the Cauchy dual operator of a
member of the underlying class is subnormal. It is
worth mentioning that there are relatively broad
subclasses of $2$-isometric or $2$-hyperexpansive
operators for which this problem has an affirmative
solution (see \cite{ACJS-1} and \cite{B-S}
respectively). In the present paper we show that the
Cauchy dual subnormality problem has a negative
solution even in the class of cyclic $2$-isometric
operators. The counterexample is implemented with the
help of a weighted composition operator on $L^2$ space
over a directed graph with a circuit (see
Theorem~\ref{kontr-cyc}).

Apart from Introduction, the paper consists of three
parts. The first one gives the theoretical background
on weighted composition operators on $L^2$ spaces
needed in this paper. In the next one, we construct a
concrete class of weighted composition operators on an
$L^2$ space and characterize the subnormality of their
Cauchy duals. In the last part, by specifying the
weights and using Hausdorff's moment problem technique
along with subtle classical analysis, we get the
required counterexample.

We will now provide the necessary concepts and facts
related to the issues discussed, placing more emphasis
on the Hausdorff moment problem. Given a complex
Hilbert space $\hh,$ we denote by $\ogr{\hh}$ the
$C^*$-algebra of all bounded linear operators on
$\hh.$ Let $T\in \ogr{\hh}.$ We write $|T|$ for
$(T^*T)^{1/2}$ and call it the {\em modulus} of $T.$
We say that $T$ is {\em cyclic} if there exists a
vector $e_0,$ called a {\em cyclic} vector of $T,$
such that the linear span of the set $\{T^n
e_0\}_{n=0}^{\infty}$ is dense in $\hh$. We call $T$
{\em subnormal} if there exist a complex Hilbert space
$\mathcal K$ and a normal operator $N\in \ogr{\mathcal
K}$ such that $\hh \subseteq \mathcal K$ (an isometric
embedding) and $Th=Nh$ for all $h\in \hh.$ If $T$ is
left-invertible (or equivalently $T$ is bounded from
below), then $T^*T$ is an invertible element of
$\ogr{\hh}$ and the operator $T':=T(T^*T)^{-1}$ is
called the {\em Cauchy dual operator} of $T$
(abbreviated to: the {\em Cauchy dual} of $T$).
Finally, $T$ is said to be a {\em $2$-isometry} (or
that $T$ is {\em $2$-isometric}) if
   \begin{align*}
I - 2 T^*T + T^{*2}T^2 = 0.
   \end{align*}
We refer the reader to \cite{Co91}, \cite{Ag-St} and
\cite{Sh} for more information on subnormal operators,
$2$-isometric operators and the Cauchy dual operation,
respectively.

Hereafter $\cbb$ stands for the field of complex
numbers. The ring of all polynomials in one complex
variable $z$ with complex coefficients is denoted by
$\cbb[z].$ Given a sequence
$\{\gamma_n\}_{n=0}^{\infty}$ of complex numbers, we
say that {\em $\gamma_n$ is a polynomial in $n$} (of
degree $d$) if there exists $p\in \cbb[z]$ (of degree
$d$) such that $\gamma_n=p(n)$ for all $n\in \zbb_+,$
where $\zbb_+:=\{0,1,2,\ldots\}.$ The following fact
will be used later (cf.\ \cite[Exercise~7.2]{Dick}).
   \begin{align} \label{delt-dela}
   \begin{minipage}{70ex}
{\em If $\gamma_n$ is a polynomial in $n$ of degree
$d,$ then}
   $$
\sum_{n=0}^m (-1)^n \binom{m}{n} \gamma_n = 0, \quad
m\Ge \max\{d+1,0\}.
   $$
   \end{minipage}
   \end{align}
A sequence $\{\gamma_n\}_{n=0}^{\infty}$ of real
numbers is called a {\em Stieltjes moment sequence}
(resp.\ {\em Hausdorff moment sequence}) if there
exists a positive Borel measure $\mu$ on $[0,\infty)$
(resp.\ $[0,1]$) such that
   \begin{align*}
\gamma_n = \int t^n d \mu(t), \quad n \in \zbb_+.
   \end{align*}
Clearly, any Hausdorff moment sequence is a Stieltjes
moment sequence, but not conversely. Using Lebesgue's
monotone convergence theorem, one can show the
following.
   \begin{align} \label{trut-uu}
   \begin{minipage}{75ex}
{\em Any bounded Stieltjes moment sequence is a
Hausdorff moment sequence.}
   \end{minipage}
   \end{align}
Recall that Hausdorff moment sequences can be
characterized as follows (see
\cite[Proposition~6.11]{B-C-R}).
   \begin{align} \label{haus-m-t}
   \begin{minipage}{75ex}
{\em A sequence $\gamma=\{\gamma_n\}_{n=0}^{\infty}$
of real numbers is a Hausdorff moment sequence if and
only if}
   $$
\sum_{n=0}^{m} (-1)^n \binom{m}{n} \gamma_{n+j} \Ge 0,
\quad j, m \in \zbb_+.
   $$
   \end{minipage}
   \end{align}
In this paper, we adhere to the conventions:
   \begin{align} \label{conv-3}
   \begin{minipage}{75ex}
$\sum_{j=m}^{n} a_j = 0$ and $\prod_{l=m}^{n} a_l =1$
whenever $m > n$ and whatever $a_j$'s are.
   \end{minipage}
   \end{align}
To simplify the notation, we write
   \begin{align*}
\{f=0\}=\{x\in X\colon f(x)=0\} \quad \text{and} \quad
\{f\neq 0\}=\{x\in X\colon f(x)\neq 0\},
   \end{align*}
whenever $f$ is a $\cbb$-valued or a
$[0,\infty]$-valued function on a set $X.$
   \section{Weighted composition operators on $L^2$-spaces}
Given a measure space $(X, \ascr, \mu)$, we denote by
$L^2(\mu)$ the complex Hilbert space of all square
$\mu$-integrable $\ascr$-measurable complex functions
on $X$ endowed with the standard inner product.
   \begin{dfn}
Let $(X, \ascr, \mu)$ be a $\sigma$-finite measure
space, $\phi \colon X \rightarrow X$ be an
$\ascr$-measurable map and $\mathsf w \colon X
\rightarrow \cbb$ be an $\ascr$-measurable function.
By a {\em weighted composition operator} in $L^2(\mu)$
we mean a mapping $\cfw \colon L^2(\mu) \supseteq
\mathscr D(\cfw) \rightarrow L^2(\mu)$ defined by
   \begin{align*}
\mathscr D(\cfw) & = \Big\{f \in L^2(\mu)\colon
\mathsf w \cdot (f \circ \phi) \in L^2(\mu) \Big\},
   \\
\cfw f &= \mathsf w \cdot (f \circ \phi), \quad f \in
\mathscr D(\cfw).
   \end{align*}
We call $\phi$ and $\mathsf w$ the {\em symbol} and
the {\em weight} of $\cfw,$ respectively.
\end{dfn}
As a matter of fact, $\cfw$ may not be well-defined.
The well-definiteness of $\cfw$ means that $\mathsf w
\cdot (f \circ \phi) = \mathsf w \cdot (g \circ \phi)$
a.e.\ $[\mu]$ whenever $f, g \colon X \rightarrow
\cbb$ are $\ascr$-measurable functions such that $f=g$
a.e.\ $[\mu]$, $f\in L^2(\mu)$ and $\mathsf w \cdot (f
\circ \phi) \in L^2(\mu)$. Below we recall several
basic properties of weighted composition operators
including well-definiteness (see
\cite[Proposition~7]{BJJS3}).
   \begin{pro} \label{skok-ki}
Let $(X, \ascr, \mu)$ be a $\sigma$-finite measure
space, $\phi \colon X \rightarrow X$ be an
$\ascr$-measurable map and $\mathsf w \colon X
\rightarrow \cbb$ be an $\ascr$-measurable function.
Then $\cfw$ is well defined if and only if
$\mu_{\mathsf w} \circ \phi^{-1} \ll \mu$, where
$\mu_{\mathsf w}$ and $\mu_{\mathsf w} \circ
\phi^{-1}$ are measures on $\ascr$ defined by
   \begin{align} \label{measure-1}
\mu_{\mathsf w} (\sigma) =\int_{\sigma} |\mathsf
w|^2d\mu \quad \text{and} \quad \mu_{\mathsf w} \circ
\phi^{-1} (\sigma) = \mu_{\mathsf
w}(\phi^{-1}(\sigma)) \quad \text{for } \sigma \in
\ascr.
   \end{align}
Moreover, if $\cfw$ is well defined, $\mathsf u\colon
X \to \cbb$ is $\ascr$-measurable and $\mathsf
u=\mathsf w$ a.e.\ $[\mu],$ then $C_{\phi,\mathsf u}$
is well defined and $C_{\phi,\mathsf u}=\cfw.$
   \end{pro}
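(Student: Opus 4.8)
The plan is to reduce both implications to the elementary fact that a nonnegative integral vanishes exactly when its integrand vanishes almost everywhere. The first step is to record a reformulation of the absolute continuity condition. By the definitions in \eqref{measure-1}, for every $\sigma\in\ascr$ we have
\[
\mu_{\mathsf w}\circ\phi^{-1}(\sigma)=\int_{\phi^{-1}(\sigma)}|\mathsf w|^2\,d\mu,
\]
so $\mu_{\mathsf w}\circ\phi^{-1}(\sigma)=0$ if and only if $\mu(\{\mathsf w\neq 0\}\cap\phi^{-1}(\sigma))=0$. Hence $\mu_{\mathsf w}\circ\phi^{-1}\ll\mu$ is equivalent to the implication that $\mu(\sigma)=0$ forces $\{\mathsf w\neq 0\}\cap\phi^{-1}(\sigma)$ to be $\mu$-null. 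This is the bridge linking the measure-theoretic hypothesis to the pointwise behaviour of $\mathsf w\cdot(f\circ\phi)$, and I would use it in both directions.

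For sufficiency I would take measurable $f,g$ with $f=g$ a.e.\ $[\mu]$, $f\in L^2(\mu)$ and $\mathsf w\cdot(f\circ\phi)\in L^2(\mu)$, and put $N=\{f\neq g\}$, so that $\mu(N)=0$. The key observation is the inclusion
\[
\{\mathsf w\cdot(f\circ\phi)\neq\mathsf w\cdot(g\circ\phi)\}\subseteq\{\mathsf w\neq 0\}\cap\phi^{-1}(N),
\]
valid because $\{(f-g)\circ\phi\neq 0\}=\phi^{-1}(N)$. By the reformulation the right-hand side is $\mu$-null, whence $\mathsf w\cdot(f\circ\phi)=\mathsf w\cdot(g\circ\phi)$ a.e.\ $[\mu]$, which is precisely well-definiteness.

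For necessity I would argue contrapositively, and this is the step where the choice of test functions is the real point. Suppose $\mu_{\mathsf w}\circ\phi^{-1}\not\ll\mu$ and pick $\sigma\in\ascr$ with $\mu(\sigma)=0$ but $\mu_{\mathsf w}\circ\phi^{-1}(\sigma)>0$, so that $\mu(\{\mathsf w\neq 0\}\cap\phi^{-1}(\sigma))>0$. The naive attempt is to apply $\cfw$ to $\chi_\sigma$, but $\mathsf w\cdot(\chi_\sigma\circ\phi)$ need not belong to $L^2(\mu)$, so the definition of well-definiteness would not be applicable. The resolution is to place all the irregularity on the function that the definition does \emph{not} require to lie in the domain: take $f=0$ and $g=\chi_\sigma$. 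Then $f=g$ a.e.\ $[\mu]$, $f\in L^2(\mu)$ and $\mathsf w\cdot(f\circ\phi)=0\in L^2(\mu)$, while $\mathsf w\cdot(g\circ\phi)=\mathsf w\cdot\chi_{\phi^{-1}(\sigma)}$ is nonzero on a set of positive $\mu$-measure. Thus $\cfw$ is not well defined, establishing the contrapositive.

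For the ``moreover'' part I would observe that $\mathsf u=\mathsf w$ a.e.\ $[\mu]$ yields $|\mathsf u|^2=|\mathsf w|^2$ a.e.\ $[\mu]$, hence $\mu_{\mathsf u}=\mu_{\mathsf w}$ and $\mu_{\mathsf u}\circ\phi^{-1}=\mu_{\mathsf w}\circ\phi^{-1}\ll\mu$; by the equivalence just proved, $C_{\phi,\mathsf u}$ is well defined. The identity $C_{\phi,\mathsf u}=\cfw$ then follows from the inclusion $\{\mathsf u\cdot(f\circ\phi)\neq\mathsf w\cdot(f\circ\phi)\}\subseteq\{\mathsf u\neq\mathsf w\}$, which is $\mu$-null; this shows at once that the two domains coincide and that the operators agree there. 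I would note in passing that none of these steps appears to require $\sigma$-finiteness of $\mu$.
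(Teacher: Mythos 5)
Your proof is correct. Note that the paper itself does not prove this proposition: it simply cites \cite[Proposition~7]{BJJS3}, so there is no in-paper argument to compare against line by line. Your self-contained argument supplies exactly the right ingredients: the reduction of $\mu_{\mathsf w}\circ\phi^{-1}(\sigma)=0$ to $\mu(\{\mathsf w\neq 0\}\cap\phi^{-1}(\sigma))=0$ via the vanishing of a nonnegative integral, the inclusion $\{\mathsf w\cdot(f\circ\phi)\neq\mathsf w\cdot(g\circ\phi)\}\subseteq\{\mathsf w\neq 0\}\cap\phi^{-1}(\{f\neq g\})$ for sufficiency, and --- the genuinely delicate point --- the choice $f=0$, $g=\chi_\sigma$ for necessity, which correctly exploits the asymmetry in the paper's definition of well-definiteness (the integrability hypotheses are imposed on $f$ only, so the possibly non-square-integrable function $\mathsf w\cdot(\chi_\sigma\circ\phi)$ is legitimately placed on the $g$-side). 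The ``moreover'' part is also handled correctly: $\mu_{\mathsf u}=\mu_{\mathsf w}$ gives well-definiteness of $C_{\phi,\mathsf u}$ by the equivalence already proved, and the null set $\{\mathsf u\neq\mathsf w\}$ forces equality of domains and of values. Your closing observation that $\sigma$-finiteness is not needed here is accurate: the only measure-theoretic fact used is that $\int_A h\,d\mu=0$ for $h\Ge 0$ iff $h=0$ a.e.\ on $A$, valid for arbitrary positive measures; $\sigma$-finiteness enters the paper only later, for the Radon--Nikodym derivative $\hfw$ and the conditional expectations.
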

To avoid repetition, we distinguish the following
assumption, which we will often refer to in this
article.
   \begin{align} \label{SA-1}
   \begin{minipage}{75ex}
$(X, \ascr, \mu)$ is a $\sigma$-finite measure space,
$\phi \colon X \rightarrow X$ is an $\ascr$-measurable
map and $\mathsf w \colon X \rightarrow \cbb$ is an
$\ascr$-measurable function such that $\mu_{\mathsf w}
\circ \phi^{-1} \ll \mu.$
   \end{minipage}
   \end{align}
If the condition \eqref{SA-1} holds, then by the
Radon-Nikodym theorem (see
\cite[Theorem~2.2.1]{Ash00}), there exists a unique
(up to a set of $\mu$-measure zero) $\ascr$-measurable
function $\hfw\colon X \rightarrow [0, \infty]$ such
that
   \begin{align} \label{measure-2}
\mu_{\mathsf w} \circ \phi^{-1}(\sigma) =
\int_{\sigma} \hfw d\mu, \quad \sigma \in \ascr.
   \end{align}

Before going further, we make an important
observation.
   \begin{lem} \label{Sam-2raz}
Suppose that \eqref{SA-1} holds. If $\hfw > 0$ a.e.\
$[\mu],$ then
   \begin{align} \label{skok-ki2}
\mu(\{\mathsf w\neq 0\} \cap \{\hfw \circ \phi =
0\})=0.
   \end{align}
   \end{lem}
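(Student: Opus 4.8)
The plan is to derive \eqref{skok-ki2} from a single application of the Radon--Nikodym relation \eqref{measure-2}, evaluated on the set where $\hfw$ vanishes. First I would record the set-theoretic identity
\[
\{\hfw \circ \phi = 0\} = \phi^{-1}(\{\hfw = 0\}),
\]
valid for the fixed representative of $\hfw$, since $\hfw(\phi(x)) = 0$ holds exactly when $\phi(x) \in \{\hfw = 0\}$. As $\phi$ is $\ascr$-measurable and $\{\hfw = 0\} \in \ascr$, this preimage lies in $\ascr$, so both $\mu_{\mathsf w}$ and $\mu_{\mathsf w} \circ \phi^{-1}$ can be applied to it.

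Writing $\Delta := \{\hfw = 0\}$, I would then compute, using \eqref{measure-2} with $\sigma = \Delta$,
\[
\mu_{\mathsf w}\big(\phi^{-1}(\Delta)\big) = (\mu_{\mathsf w} \circ \phi^{-1})(\Delta) = \int_{\Delta} \hfw \, d\mu = 0,
\]
where the last equality holds simply because $\hfw$ is identically $0$ on $\Delta$. Unwinding the definition \eqref{measure-1} of $\mu_{\mathsf w}$, this reads $\int_{\phi^{-1}(\Delta)} |\mathsf w|^2 \, d\mu = 0$, and hence $\mathsf w = 0$ almost everywhere $[\mu]$ on $\phi^{-1}(\Delta) = \{\hfw \circ \phi = 0\}$. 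That is precisely $\mu(\{\mathsf w \neq 0\} \cap \{\hfw \circ \phi = 0\}) = 0$, the desired conclusion \eqref{skok-ki2}.

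I do not expect a serious analytic obstacle here: the argument rests entirely on the preimage identity together with feeding $\Delta$ into \eqref{measure-2}. The one point deserving care is that $\{\hfw \circ \phi = 0\}$ depends on the chosen representative of $\hfw$: two representatives differing on a $\mu$-null set $Z$ yield sets whose symmetric difference is contained in $\phi^{-1}(Z)$, which need not be $\mu$-null. However, the same computation gives $\mu_{\mathsf w}(\phi^{-1}(Z)) = \int_Z \hfw \, d\mu = 0$, so $\mathsf w$ vanishes a.e.\ on $\phi^{-1}(Z)$ as well, and the statement is representative-independent. The standing hypothesis $\hfw > 0$ a.e.\ guarantees $\mu(\Delta) = 0$, which is how the lemma will be invoked, although the vanishing of $\int_\Delta \hfw \, d\mu$ above uses only that $\hfw = 0$ on $\Delta$.
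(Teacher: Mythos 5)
Your proof is correct and follows essentially the same route as the paper: both reduce the claim to showing $\mu_{\mathsf w}\big(\phi^{-1}(\{\hfw=0\})\big)=0$, the paper by combining $\mu(\{\hfw=0\})=0$ with the absolute continuity $\mu_{\mathsf w}\circ\phi^{-1}\ll\mu$ from \eqref{SA-1}, and you by evaluating the Radon--Nikodym identity \eqref{measure-2} on $\{\hfw=0\}$. Your variant has the minor merit of making explicit that the hypothesis $\hfw>0$ a.e.\ is not actually needed, since $\int_{\{\hfw=0\}}\hfw\,d\mu=0$ holds trivially.
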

   \begin{proof}
Since $\mu(\{\hfw = 0\})=0$ and $\{\hfw \circ \phi =
0\} = \phi^{-1}(\{\hfw = 0\}),$ we infer from
\eqref{SA-1} that $\mu_{\mathsf w}(\{\hfw \circ \phi =
0\})=0,$ which yields \eqref{skok-ki2}.
   \end{proof}
   In view of \cite[Proposition 8(v) and
Theorem~18]{BJJS3}, the following is valid.
   \begin{align} \label{bod-ed}
   \begin{minipage}{70ex}
{\em If \eqref{SA-1} holds, then $\cfw\in
\ogr{L^2(\mu)}$ if and only if $\hfw\in
L^{\infty}(\mu);$ if this is the case, then
$\|\cfw\|^2 = \|\hfw\|_{L^{\infty}(\mu)}$ and the
modulus $|\cfw|$ of $\cfw$ equals the operator
$M_{h^{1/2}_{\phi, \mathsf w}}$ of multiplication by
$h^{1/2}_{\phi, \mathsf w}$ in $L^2(\mu)$.}
   \end{minipage}
   \end{align}
From now on, we assume that $\hfw$ takes finite values
whenever $\cfw\in \ogr{L^2(\mu)}.$ This assumption is
justified by \eqref{bod-ed}.
   \begin{pro} \label{prop-erties}
Suppose that \eqref{SA-1} holds and $\cfw\in
\ogr{L^2(\mu)}.$ Then
   \begin{enumerate}
   \item[(i)] if $c\in (0,\infty)$, then
$\|\cfw f\| \Ge c \|f\|$ for all $f\in L^2(\mu)$ if
and only if $\hfw \Ge c^2$ a.e.\ $[\mu],$
   \item[(ii)] if $\cfw$ is bounded from below, then
the Cauchy dual $C'_{\phi, \mathsf w}$ of $\cfw$
equals $C_{\phi, \mathsf w'},$ where $\mathsf w'
\colon X \rightarrow \cbb$ is any $\ascr$-measurable
function such that
   \begin{align} \label{dual-w}
\mathsf w' =
   \begin{cases}
\frac{\mathsf w}{\hfw \circ \phi} & \text{on }
\{\mathsf w\neq 0\} \cap \{\hfw \circ \phi >0\},
   \\
0 & \text{on } \{\mathsf w = 0\};
   \end{cases}
   \end{align}
in this case, $h_{\phi, \mathsf w'}=\frac{1}{\hfw}$
a.e.\ $[\mu]$.
   \end{enumerate}
   \end{pro}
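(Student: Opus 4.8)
The plan is to reduce everything to the multiplication operator $M_{\hfw}$ supplied by \eqref{bod-ed}. For (i), I would first record the identity
\[
\|\cfw f\|^2 = \langle \cfw^*\cfw f, f\rangle = \int_X \hfw\,|f|^2\,d\mu, \qquad f \in L^2(\mu),
\]
which follows from \eqref{bod-ed} since $\cfw^*\cfw = |\cfw|^2 = M_{\hfw}$. The forward implication is then immediate: if $\hfw \Ge c^2$ a.e., then $\int \hfw|f|^2 \Ge c^2\|f\|^2$. For the converse I would argue by contraposition: if $\mu(\{\hfw < c^2\}) > 0$, then using $\sigma$-finiteness I can select a set $\sigma$ with $0 < \mu(\sigma) < \infty$ on which $\hfw \Le c^2 - \varepsilon$ for some $\varepsilon > 0$, and testing the inequality against $f = \chi_\sigma$ produces $\|\cfw f\|^2 \Le (c^2-\varepsilon)\mu(\sigma) < c^2\|f\|^2$, a contradiction.

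For (ii), since $\cfw$ is bounded below, part (i) gives $\hfw \Ge c^2 > 0$ a.e.\ for some $c$, so $\hfw > 0$ a.e.\ and, as $\hfw \in L^\infty(\mu)$, the operator $\cfw^*\cfw = M_{\hfw}$ is invertible with $(\cfw^*\cfw)^{-1} = M_{1/\hfw}$. Hence the Cauchy dual is $C'_{\phi,\mathsf w} = \cfw M_{1/\hfw}$, and for $f \in L^2(\mu)$ I compute
\[
C'_{\phi,\mathsf w} f = \cfw\Big(\frac{f}{\hfw}\Big) = \mathsf w \cdot \Big(\frac{f}{\hfw}\circ\phi\Big) = \frac{\mathsf w}{\hfw\circ\phi}\cdot(f\circ\phi).
\]
Comparing with $C_{\phi,\mathsf w'} f = \mathsf w'\cdot(f\circ\phi)$, I would verify that $\mathsf w'\cdot(f\circ\phi) = \frac{\mathsf w}{\hfw\circ\phi}(f\circ\phi)$ a.e.\ by splitting $X$ into $\{\mathsf w = 0\}$, where both sides vanish, and $\{\mathsf w\neq 0\}\cap\{\hfw\circ\phi>0\}$, where the two expressions for $\mathsf w'$ agree by \eqref{dual-w}. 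This identifies $C'_{\phi,\mathsf w}$ with $C_{\phi,\mathsf w'}$ and simultaneously shows $C_{\phi,\mathsf w'}$ is well defined with $\mathscr D(C_{\phi,\mathsf w'}) = L^2(\mu)$.

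The main obstacle is the region $\{\mathsf w\neq 0\}\cap\{\hfw\circ\phi = 0\}$, where the formula $\mathsf w/(\hfw\circ\phi)$ is meaningless; this is exactly what Lemma~\ref{Sam-2raz} handles, for since $\hfw > 0$ a.e.\ it guarantees this set is $\mu$-null, so the values of $\mathsf w'$ there are irrelevant (which is why \eqref{dual-w} leaves them unspecified). Finally, for the identity $h_{\phi,\mathsf w'} = 1/\hfw$ I would compute $|C_{\phi,\mathsf w'}|^2$ directly from the representation $C'_{\phi,\mathsf w} = \cfw M_{1/\hfw}$: since $M_{1/\hfw}$ is self-adjoint,
\[
(C'_{\phi,\mathsf w})^* C'_{\phi,\mathsf w} = M_{1/\hfw}\,\cfw^*\cfw\,M_{1/\hfw} = M_{1/\hfw}\,M_{\hfw}\,M_{1/\hfw} = M_{1/\hfw},
\]
while \eqref{bod-ed} applied to $C_{\phi,\mathsf w'}$ gives $(C_{\phi,\mathsf w'})^* C_{\phi,\mathsf w'} = M_{h_{\phi,\mathsf w'}}$. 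Equating the two multiplication operators yields $h_{\phi,\mathsf w'} = 1/\hfw$ a.e.
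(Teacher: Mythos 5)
Your argument is correct, and for the heart of part (ii) — writing $C'_{\phi,\mathsf w}=\cfw M_{1/\hfw}$, matching $\mathsf w'$ with $\mathsf w\cdot\bigl((1/\hfw)\circ\phi\bigr)$ on $\{\mathsf w=0\}$ and on $\{\mathsf w\neq 0\}\cap\{\hfw\circ\phi>0\}$, and invoking Lemma~\ref{Sam-2raz} to dispose of the exceptional set — it coincides with the paper's proof. You diverge in two places. For (i) the paper simply adapts an argument from the reference \cite[Proposition~4]{BJJS1}, whereas you give the standard self-contained proof via $\|\cfw f\|^2=\int_X \hfw|f|^2\,d\mu$ and a contrapositive test against $\chi_\sigma$; this is fine and makes the step verifiable without the external source. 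For the identity $h_{\phi,\mathsf w'}=1/\hfw$, the paper computes the Radon--Nikodym derivative directly, evaluating $\mu_{\mathsf w'}\circ\phi^{-1}(\varDelta)=\int_\varDelta g^2\hfw\,d\mu=\int_\varDelta \hfw^{-1}\,d\mu$ by the change-of-variables theorem, while you use the operator identity $(T')^*T'=(T^*T)^{-1}$ together with \eqref{bod-ed} applied to $C_{\phi,\mathsf w'}$. Your route is shorter, but note that applying \eqref{bod-ed} to $C_{\phi,\mathsf w'}$ presupposes that \eqref{SA-1} holds for the pair $(\phi,\mathsf w')$, i.e.\ $\mu_{\mathsf w'}\circ\phi^{-1}\ll\mu$; this does follow (from $\mathsf w'=\mathsf w\cdot(g\circ\phi)$ a.e.\ and the ``moreover'' part of Proposition~\ref{skok-ki}, since $\mu_{\mathsf w\cdot(g\circ\phi)}\ll\mu_{\mathsf w}$), but it deserves an explicit sentence — the paper's integral computation establishes exactly this absolute continuity as a byproduct, which is what it buys in exchange for being longer. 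One more cosmetic point: $M_{1/\hfw}$ should really be $M_g$ for a globally defined measurable $g$ equal to $1/\hfw$ on $\{\hfw>0\}$, as the paper does, since $\hfw$ may vanish on a null set.
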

   \begin{proof}
(i) Adapting the proof of \cite[Proposition 4]{BJJS1},
we get the statement (i).

(ii) Note that by \eqref{bod-ed} and (i), there exists
$c_1,c_2 \in (0,\infty)$ such that $c_1 \Le \hfw \Le
c_2$ a.e.\ $[\mu].$ By virtue of \eqref{bod-ed}, we
have
   \begin{align}  \label{bez-kon}
C'_{\phi, \mathsf w} = \cfw(|\cfw|^2)^{-1} = \cfw
M^{-1}_{\hfw} = \cfw M_{g},
   \end{align}
where $g\colon X \to [0,\infty)$ is an
$\ascr$-measurable function such that
$g=\frac{1}{\hfw}$ on the set $\{\hfw > 0\}.$ It is
easily seen that
   \begin{align*}
\mathsf w' \overset{\eqref{dual-w}}= \mathsf w \cdot
(g \circ \phi) \quad \text{on} \quad (\{\mathsf w\neq
0\} \cap \{\hfw \circ \phi >0\}) \cup \{\mathsf w=
0\}.
   \end{align*}
Hence, by Lemma~\ref{Sam-2raz}, $\mathsf w' = \mathsf
w \cdot (g\circ \phi)$ a.e.\ $[\mu].$
Proposition~\ref{skok-ki} yields
   \begin{align*}
C'_{\phi, \mathsf w}\overset{\eqref{bez-kon}}=C_{\phi,
\mathsf w \cdot (g\circ \phi)}=C_{\phi, \mathsf w'}.
   \end{align*}
Now applying \cite[Theorem 1.6.12]{Ash00} and
\eqref{measure-2}, we obtain
   \allowdisplaybreaks
   \begin{align*}
\mu_{\mathsf w'} \circ \phi^{-1}(\Delta) =
\mu_{\mathsf w \cdot (g \circ \phi)} \circ
\phi^{-1}(\Delta) & \overset{\eqref{measure-1}}=
\int_X (\chi_{\Delta} \circ \phi) (g^2\circ \phi)
d\mu_{\mathsf w}
   \\
&\hspace{.3ex} = \int_{\Delta} g^2 \hfw d\mu
   \\
&\hspace{.3ex} = \int_\Delta \frac{1}{\hfw} d\mu,
\quad \varDelta\in \ascr,
   \end{align*}
which shows that $h_{\phi, \mathsf w'}=\frac{1}{\hfw}$
a.e.\ $[\mu]$. This completes the proof.
   \end{proof}
Recall that (see \cite[Lemma 26]{BJJS3}) if
\eqref{SA-1} holds and $\cfw \in \ogr{L^2(\mu)},$ then
for any integer $n\Ge 1,$ the $n$th power $C^n_{\phi,
\mathsf w}$ of $\cfw$ equals $C_{\phi^n, \mathsf
w_{[n]}},$ where $\phi^n$ denotes the $n$-fold
composition of $\phi$ with itself ($\phi^0$ is the
identity map on $X$) and $\mathsf w_{[n]} \colon X
\rightarrow \cbb$ is the function given by
   \begin{align} \label{hat-w}
\text{$\mathsf w_{[0]} = 1$ and $\mathsf w_{[n]} =
\prod_{j=0}^{n-1} \mathsf w \circ \phi^j$ for $n \Ge
1$.}
   \end{align}
Moreover, $h_{\phi^0, \mathsf w_{[0]}}=1$ a.e.\
$[\mu]$ and the following recurrence formula holds:
   \begin{align}  \label{rec1}
h_{\phi^{n+1}, \mathsf w_{[n+1]}} & = \hfw \cdot
\efw\big(h_{\phi^n, \mathsf w_{[n]}}\big)\circ
\phi^{-1} \text{ a.e.\ $[\mu]$}, \quad n\in \zbb_+,
   \end{align}
where $\efw(f)$ stands for the conditional expectation
of an $\ascr$-measurable function $f\colon X \to
[0,\infty)$ with respect to the $\sigma$-algebra
$\phi^{-1}(\ascr)$ and the measure $\mu_{\mathsf w}$;
we refer the reader to \cite[Sect.~2.4]{BJJS3} for the
precise definitions of $\efw(f)$ and $\efw(f) \circ
\phi^{-1}.$ The above discussion and
Proposition~\ref{prop-erties} yield the following.
   \begin{pro} \label{power-dual}
Suppose that \eqref{SA-1} holds, $\cfw \in
\ogr{L^2(\mu)}$ and $\cfw$ is bounded from below. Then
the $n$th power $C'^n_{\phi, \mathsf w}$ of the Cauchy
dual $C'_{\phi, \mathsf w}$ of $\cfw$ equals
$C_{\phi^n, \mathsf w'_{[n]}},$ where $\mathsf
w'_{[n]} \colon X \rightarrow \cbb$ is the function
given by
   \begin{align} \label{hat-u-k}
\mathsf w'_{[0]}=1 \text{ and } \mathsf w'_{[n]} =
\prod_{j=0}^{n-1} \mathsf w' \circ \phi^j \text{ for }
n\Ge 1,
   \end{align}
with $\mathsf w'$ as in \eqref{dual-w}. Moreover,
$h_{\phi^0, \mathsf w'_{[0]}}=1$ a.e.\ $[\mu]$ and
   \begin{align*}
h_{\phi^{n+1}, \mathsf w'_{[n+1]}} = \frac{1}{\hfw} \,
\mathsf{E}_{\phi,\mathsf w'}(h_{\phi^{n}, \mathsf
w'_{[n]}}) \circ \phi^{-1} \text{ a.e.\ $[\mu]$},
\quad n \in \zbb_+.
   \end{align*}
   \end{pro}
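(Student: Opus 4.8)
The plan is to reduce the statement to the power formula recalled in \eqref{hat-w}--\eqref{rec1}, applied not to $\cfw$ itself but to the weighted composition operator $C_{\phi, \mathsf w'}$ which, by Proposition~\ref{prop-erties}(ii), represents the Cauchy dual $C'_{\phi, \mathsf w}$. The whole argument then amounts to invoking the earlier results in the right order and performing one substitution.

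First I would record that, since $\cfw \in \ogr{L^2(\mu)}$ is bounded from below, its Cauchy dual is a bounded operator and, by Proposition~\ref{prop-erties}(ii),
\[
C'_{\phi, \mathsf w} = C_{\phi, \mathsf w'}, \qquad h_{\phi, \mathsf w'} = \frac{1}{\hfw} \ \text{a.e.\ } [\mu],
\]
with $\mathsf w'$ as in \eqref{dual-w}. Before applying the power formula to $C_{\phi, \mathsf w'}$, I must check that the pair $(\phi, \mathsf w')$ meets the standing hypotheses. The equality $h_{\phi, \mathsf w'} = 1/\hfw$ together with \eqref{measure-2} shows that $\mu_{\mathsf w'} \circ \phi^{-1} \ll \mu$, so that \eqref{SA-1} holds for $(\phi, \mathsf w')$; this is precisely the computation carried out at the end of the proof of Proposition~\ref{prop-erties}(ii). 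Moreover, by \eqref{bod-ed} and Proposition~\ref{prop-erties}(i) there is $c_1 \in (0,\infty)$ with $\hfw \Ge c_1$ a.e.\ $[\mu]$, whence $h_{\phi, \mathsf w'} = 1/\hfw \Le 1/c_1$ a.e.\ $[\mu]$; thus $h_{\phi, \mathsf w'} \in L^\infty(\mu)$ and, again by \eqref{bod-ed}, $C_{\phi, \mathsf w'} \in \ogr{L^2(\mu)}$.

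With these verifications in hand, I would apply the power formula \eqref{hat-w}--\eqref{rec1} (see \cite[Lemma~26]{BJJS3}) to the operator $C_{\phi, \mathsf w'}$ in place of $\cfw$. Forming its iterated weights according to \eqref{hat-w} produces exactly the functions $\mathsf w'_{[n]}$ of \eqref{hat-u-k}, so $C^n_{\phi, \mathsf w'} = C_{\phi^n, \mathsf w'_{[n]}}$ for $n \Ge 1$; combined with $C'^n_{\phi, \mathsf w} = (C_{\phi, \mathsf w'})^n$, this gives the asserted power identity, while $h_{\phi^0, \mathsf w'_{[0]}} = 1$ a.e.\ $[\mu]$ is the initial condition coming from the same formula. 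Finally, the recurrence \eqref{rec1} written for $C_{\phi, \mathsf w'}$ reads
\[
h_{\phi^{n+1}, \mathsf w'_{[n+1]}} = h_{\phi, \mathsf w'} \cdot \mathsf{E}_{\phi, \mathsf w'}\big(h_{\phi^{n}, \mathsf w'_{[n]}}\big) \circ \phi^{-1} \ \text{a.e.\ } [\mu], \quad n \in \zbb_+,
\]
and substituting $h_{\phi, \mathsf w'} = 1/\hfw$ yields the displayed recurrence.

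I do not anticipate a genuine obstacle: the substance is entirely provided by Proposition~\ref{prop-erties} and the recalled power formula, and the only care needed is the bookkeeping that confirms $(\phi, \mathsf w')$ satisfies \eqref{SA-1} and that $C_{\phi, \mathsf w'}$ is bounded, so that the generic power result may legitimately be applied to the Cauchy dual.
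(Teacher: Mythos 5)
Your proposal is correct and follows essentially the same route as the paper, which derives the proposition by combining Proposition~\ref{prop-erties}(ii) (giving $C'_{\phi,\mathsf w}=C_{\phi,\mathsf w'}$ and $h_{\phi,\mathsf w'}=1/\hfw$) with the recalled power formula \eqref{hat-w}--\eqref{rec1} applied to $C_{\phi,\mathsf w'}$. Your explicit verification that $(\phi,\mathsf w')$ satisfies \eqref{SA-1} and that $C_{\phi,\mathsf w'}$ is bounded merely spells out details the paper leaves implicit.
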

The subnormality of the Cauchy dual $\cfw'$ of a
left-invertible bounded weighted composition operator
$\cfw$ can be characterized as follows (cf.\
\cite[Theorem~4.5]{J-K}).
   \begin{pro} \label{subnormal-dual}
Suppose that \eqref{SA-1} holds, $\cfw \in
\ogr{L^2(\mu)}$ and $\cfw$ is bounded from below. Then
   \begin{enumerate}
   \item[(i)] $C'_{\phi, \mathsf w}$ is
subnormal if and only if $\{h_{\phi^n, \mathsf
w'_{[n]}}(x)\}_{n=0}^{\infty}$ is a Stieltjes moment
sequence for $\mu$-a.e.\ $x \in X,$
   \item[(ii)] if $\|\cfw (f)\| \Ge \|f\|$ for all $f\in L^2(\mu)$
and $C'_{\phi, \mathsf w}$ is subnormal, then
$\{h_{\phi^n, \mathsf w'_{[n]}}(x)\}_{n=0}^{\infty}$
is a Hausdorff moment sequence for $\mu$-a.e.\ $x \in
X.$
   \end{enumerate}
   \end{pro}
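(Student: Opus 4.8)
The plan is to combine Proposition~\ref{power-dual} and \eqref{bod-ed}, which describe the powers of the Cauchy dual as multiplication-type objects, with a Lambert-type characterization of subnormality, and then to pass from the operator level to the level of $\mu$-almost every point of $X$ by a countable-density argument. First I would record that, by Proposition~\ref{power-dual}, $C'^n_{\phi, \mathsf w}=C_{\phi^n, \mathsf w'_{[n]}}$, and by \eqref{bod-ed} the modulus of this operator is the multiplication operator by $h^{1/2}_{\phi^n, \mathsf w'_{[n]}}$ in $L^2(\mu)$. Writing $h_n:=h_{\phi^n, \mathsf w'_{[n]}}$, this gives
\begin{align*}
\|C'^n_{\phi, \mathsf w} f\|^2 = \big\langle |C'^n_{\phi, \mathsf w}|^2 f, f \big\rangle = \int_X h_n\, |f|^2\, d\mu, \quad f\in L^2(\mu),\ n\in \zbb_+.
\end{align*}
By the Lambert-type characterization of subnormality (cf.\ \cite[Theorem~4.5]{J-K}), $C'_{\phi, \mathsf w}$ is subnormal if and only if $\{\|C'^n_{\phi, \mathsf w} f\|^2\}_{n=0}^{\infty}$ is a Stieltjes moment sequence for every $f\in L^2(\mu)$. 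Hence the task reduces to proving the equivalence of this integrated condition with the pointwise condition in~(i).

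The bridge is the classical Hankel-form characterization of Stieltjes moment sequences (the counterpart of \eqref{haus-m-t}): a real sequence $\{\gamma_n\}_{n=0}^{\infty}$ is a Stieltjes moment sequence if and only if
\begin{align*}
\sum_{i,j=0}^{m} \overline{a_i} a_j\, \gamma_{i+j} \Ge 0 \quad \text{and} \quad \sum_{i,j=0}^{m} \overline{a_i} a_j\, \gamma_{i+j+1} \Ge 0
\end{align*}
for all $m\in \zbb_+$ and all $a_0,\ldots,a_m\in \cbb$. For the implication ``pointwise a.e.\ $\Rightarrow$ subnormal'' I would fix $f$ and integrate the pointwise inequalities for $\{h_n(x)\}$ against $|f|^2\,d\mu$; this yields the corresponding inequalities for $\gamma_n:=\int_X h_n\,|f|^2\,d\mu=\|C'^n_{\phi, \mathsf w} f\|^2$, so the integrated sequence is Stieltjes and $C'_{\phi, \mathsf w}$ is subnormal. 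For the reverse implication I would specialize $f=\chi_\sigma$ with $\sigma\in \ascr$ of finite measure; then $\int_\sigma \sum_{i,j}\overline{a_i} a_j\, h_{i+j}\,d\mu\Ge 0$ for every such $\sigma$, and likewise for the shifted form.

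The main obstacle is exactly this last step, because the $\mu$-null set on which a Hankel inequality could fail depends on the coefficient tuple $(a_0,\ldots,a_m)$, so one cannot at once assert that $\{h_n(x)\}$ is Stieltjes for a fixed a.e.\ $x$. I would resolve this by restricting to coefficients in the countable field $\cbb_0:=\mathbb Q+i\mathbb Q$: for each $m$ and each rational tuple the displayed integral inequality forces the corresponding quadratic form to be nonnegative $\mu$-a.e., and discarding the countable union of the associated null sets leaves a single conull set on which \emph{all} rational Hankel inequalities (unshifted and shifted) hold simultaneously. Continuity of the forms $(a_0,\ldots,a_m)\mapsto \sum_{i,j}\overline{a_i} a_j\, h_{i+j}(x)$ then extends nonnegativity from $\cbb_0$ to all of $\cbb$, so $\{h_n(x)\}_{n=0}^{\infty}$ is a Stieltjes moment sequence for $\mu$-a.e.\ $x$; measurability of each $h_n$ makes all the sets involved $\ascr$-measurable. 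This proves~(i).

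For (ii) I would add a boundedness estimate. The hypothesis $\|\cfw f\|\Ge \|f\|$ is equivalent, by Proposition~\ref{prop-erties}(i), to $\hfw\Ge 1$ a.e., equivalently $|\cfw|^2\Ge I$, whence $C'^{*}_{\phi, \mathsf w}C'_{\phi, \mathsf w}=|\cfw|^{-2}\Le I$, so $C'_{\phi, \mathsf w}$ is a contraction and $\|C'^n_{\phi, \mathsf w} f\|\Le \|f\|$ for all $n$ and $f$. Taking $f=\chi_\sigma$ gives $\int_\sigma h_n\,d\mu\Le \mu(\sigma)$ for every $\sigma\in \ascr$, so $h_n\Le 1$ a.e.\ for each $n$, and removing a further countable union of null sets yields $\sup_{n} h_n(x)\Le 1$ for $\mu$-a.e.\ $x$. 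Combining this with~(i), for $\mu$-a.e.\ $x$ the sequence $\{h_n(x)\}$ is a bounded Stieltjes moment sequence, hence a Hausdorff moment sequence by \eqref{trut-uu}, which is the assertion of~(ii).
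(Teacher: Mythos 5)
Your argument is correct, and part (ii) coincides with the paper's own proof: both derive that $C'_{\phi,\mathsf w}$ is a contraction from $\hfw\Ge 1$ a.e.\ (equivalently $C'^{*}_{\phi,\mathsf w}C'_{\phi,\mathsf w}=(C^*_{\phi,\mathsf w}C_{\phi,\mathsf w})^{-1}\Le I$), conclude via \eqref{bod-ed} and Proposition~\ref{power-dual} that $h_{\phi^n,\mathsf w'_{[n]}}\Le 1$ a.e.\ for every $n$, and then invoke (i) together with \eqref{trut-uu}. For part (i), however, you take a genuinely different and more self-contained route. The paper disposes of (i) in one line by combining Proposition~\ref{power-dual} with \cite[Theorem~49]{BJJS3}, which is precisely the statement that a bounded weighted composition operator is subnormal if and only if $\{h_{\phi^n,\mathsf w_{[n]}}(x)\}_{n=0}^\infty$ is a Stieltjes moment sequence for $\mu$-a.e.\ $x$; applied to $C'_{\phi,\mathsf w}=C_{\phi,\mathsf w'}$ this is exactly (i). You instead reprove the content of that cited theorem: you reduce subnormality to Lambert's criterion (that $\{\|C'^n_{\phi,\mathsf w}f\|^2\}_n$ be Stieltjes for every $f$), identify $\|C'^n_{\phi,\mathsf w}f\|^2=\int_X h_{\phi^n,\mathsf w'_{[n]}}|f|^2\,d\mu$ via \eqref{bod-ed}, and pass between the integrated and pointwise conditions through the two-Hankel-form characterization of Stieltjes moment sequences, using indicator functions, $\sigma$-finiteness, and a countable dense set of rational coefficient tuples to control the exceptional null sets. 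This is the standard proof pattern behind \cite[Theorem~49]{BJJS3}, and your handling of the coefficient-dependent null sets is exactly the right fix for the only delicate point (note also that arguing through the Hankel inequalities, rather than trying to assemble a representing measure $\int_X\nu_x\,|f(x)|^2\,d\mu(x)$ directly, neatly sidesteps any measurable-selection issue). What your route buys is independence from the black-boxed reference, at the cost of importing Lambert's theorem and several pages of measure-theoretic bookkeeping that the paper simply outsources; what the paper's route buys is brevity. Both are valid proofs of the proposition as stated.
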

   \begin{proof}
(i) Combine \cite[Theorem 49]{BJJS3} with
Proposition~\ref{power-dual}

(ii) Suppose now that $\|\cfw (f)\| \Ge \|f\|$ for all
$f\in L^2(\mu)$ and $\cfw'$ is subnormal. Then $\cfw'$
is a contraction and consequently so is its $n$th
power $\cfw^{\prime n}$ for every $n\in \zbb_+.$ In
view of \eqref{bod-ed} and
Proposition~\ref{power-dual}, the sequence
$\{h_{\phi^n, \mathsf w'_{[n]}}(x)\}_{n=0}^{\infty}$
is bounded by $1$ for $\mu$-a.e.\ $x \in X.$ Applying
(i) and \eqref{trut-uu} completes the proof.
   \end{proof}
We conclude this section by characterizing bounded
$2$-isometric weighted composition operators (see
\cite[Lemma~2.3]{Jab03} for the case of composition
operators).
   \begin{pro} \label{C-phi-2-iso}
Suppose that \eqref{SA-1} holds and $\cfw \in
\ogr{L^2(\mu)}.$ Then the following conditions are
equivalent{\em :}
\begin{enumerate}
   \item[(i)] $\cfw$ is a $2$-isometry,
   \item[(ii)] $1-2\hfw + h_{\phi^2, \mathsf w_{[2]}} = 0$
a.e.\ $[\mu],$
   \item[(iii)] $1-2\hfw + \hfw
\cdot \efw (\hfw) \circ \phi^{-1} = 0$ a.e.\ $[\mu].$
   \end{enumerate}
   \end{pro}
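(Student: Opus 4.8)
The plan is to convert the operator identity defining a $2$-isometry into an a.e.\ scalar identity, by exploiting the fact (recorded in \eqref{bod-ed}) that the moduli of $\cfw$ and of its powers are multiplication operators. Once both equivalences are reduced to pointwise equalities of $\ascr$-measurable functions, the passage between (ii) and (iii) is just a substitution of the recurrence \eqref{rec1}.

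For the equivalence (i)$\Leftrightarrow$(ii), I would first recall that $\cfw^n = C_{\phi^n, \mathsf w_{[n]}}$ and apply \eqref{bod-ed} both to $\cfw$ and to $\cfw^2 = C_{\phi^2, \mathsf w_{[2]}}$. This identifies the two relevant positive operators as multiplication operators: $\cfw^*\cfw = |\cfw|^2 = M_{\hfw}$ and $\cfw^{*2}\cfw^2 = |\cfw^2|^2 = M_{h_{\phi^2, \mathsf w_{[2]}}}$, where $M_g$ denotes multiplication by $g$ in $L^2(\mu)$. Since $I = M_1$ and multiplication operators combine pointwise, the defect operator of the $2$-isometry condition equals $I - 2\cfw^*\cfw + \cfw^{*2}\cfw^2 = M_{1 - 2\hfw + h_{\phi^2, \mathsf w_{[2]}}}$. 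Because a multiplication operator on $L^2(\mu)$, with $\mu$ $\sigma$-finite, is the zero operator if and only if its symbol vanishes a.e.\ $[\mu]$, condition (i) is equivalent to $1 - 2\hfw + h_{\phi^2, \mathsf w_{[2]}} = 0$ a.e.\ $[\mu]$, which is precisely (ii).

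For (ii)$\Leftrightarrow$(iii), I would substitute the recurrence \eqref{rec1} at $n=1$. Since $\mathsf w_{[1]} = \mathsf w$ and $\phi^1 = \phi$, one has $h_{\phi^1, \mathsf w_{[1]}} = \hfw$, so \eqref{rec1} reads $h_{\phi^2, \mathsf w_{[2]}} = \hfw \cdot \efw(\hfw)\circ \phi^{-1}$ a.e.\ $[\mu]$. Inserting this into the scalar identity of (ii) gives $1 - 2\hfw + \hfw \cdot \efw(\hfw)\circ \phi^{-1} = 0$ a.e.\ $[\mu]$, which is exactly (iii); reading the same substitution backwards yields the converse. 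Thus the three conditions are equivalent.

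The calculation is essentially bookkeeping, and the only step calling for care --- the nearest thing to an obstacle --- is the identification $\cfw^{*2}\cfw^2 = M_{h_{\phi^2, \mathsf w_{[2]}}}$. This relies on $\cfw^2 = C_{\phi^2, \mathsf w_{[2]}}$ being again a weighted composition operator of the same type, so that \eqref{bod-ed} applies with $(\phi^2, \mathsf w_{[2]})$ in place of $(\phi, \mathsf w)$; here one uses that boundedness and the absolute continuity in \eqref{SA-1} are inherited by this pair (the operator $\cfw^2$ being bounded and well defined), after which the modulus formula of \eqref{bod-ed} gives $|\cfw^2| = M_{h^{1/2}_{\phi^2, \mathsf w_{[2]}}}$ and hence $\cfw^{*2}\cfw^2 = M_{h_{\phi^2, \mathsf w_{[2]}}}$.
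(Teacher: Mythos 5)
Your proposal is correct and follows essentially the same route as the paper: both identify $C^{*n}_{\phi,\mathsf w}C^n_{\phi,\mathsf w}$ with $M_{h_{\phi^n,\mathsf w_{[n]}}}$ via $C^n_{\phi,\mathsf w}=C_{\phi^n,\mathsf w_{[n]}}$ and \eqref{bod-ed} to get (i)$\Leftrightarrow$(ii), and then use the recurrence \eqref{rec1} at $n=1$ for (ii)$\Leftrightarrow$(iii). Your write-up simply makes explicit the bookkeeping that the paper leaves to the reader.
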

   \begin{proof}
Since $C^n_{\phi, \mathsf w}=C_{\phi^n, \mathsf
w_{[n]}},$ we infer from \eqref{bod-ed} that
$C^{*n}_{\phi, \mathsf w}C^n_{\phi, \mathsf
w}=M_{h_{\phi^n, \mathsf w_{[n]}}}$ for any $n\in
\zbb_+.$ Applying the definition of $2$-isometricity,
one can show that (i) and (ii) are equivalent. That
(ii) and (iii) are equivalent follows from
\eqref{rec1}.
   \end{proof}
   \section{A family of weighted composition operators on $\ell^2(\zbb_+)$}
In this section we concentrate on a family of weighted
composition operators coming from \cite[Example
42]{BJJS1} (see also \cite[Section 3.2]{BJJS2}).
   \begin{exa} \label{prz-g}
Denote by $\ascr$ the power set $2^{\zbb_+}$ of
$\zbb_+$ and by $\mu$ the counting measure on
$2^{\zbb_+}$. Clearly, all selfmaps of $\zbb_+$ and
complex functions on $\zbb_+$ are $\ascr$-measurable.
Note that $(\zbb_+, \ascr, \mu)$ is a $\sigma$-finite
measure space. For $n\in \zbb_+,$ we denote by $e_n$
the element of $L^2(\mu)$ given by
   \begin{align*}
e_n(m) =
   \begin{cases}
1 & \text{if } m=n,
   \\
0 & \text{otherwise.}
   \end{cases}
   \end{align*}
Clearly, $\{e_n\}_{n=0}^{\infty}$ is an orthonormal
basis of $L^2(\mu).$ Define the map $\phi\colon \zbb_+
\to \zbb_+$ by
   \begin{align*}
\phi(n) = \begin{cases} 0 & n=0,
   \\
n-1 & \text{otherwise}.
   \end{cases}
   \end{align*}
Let $\mathsf w \colon \zbb_+ \rightarrow \cbb$ be any
function. It is obvious that $\mu_{\mathsf w} \circ
\phi^{-1} \ll \mu$ and so $\cfw$ is well defined (cf.\
Proposition~\ref{skok-ki}). It follows from
\eqref{measure-2} that
   \begin{align} \label{hfw-1}
\hfw(n) = \sum_{j \in \phi^{-1}(\{n\})} |\mathsf
w(j)|^2, \quad n \in \zbb_+,
   \end{align}
which yields
   \begin{align} \label{isom-1}
\hfw(0) = \alpha_{\mathsf w} \text{ and } \hfw(n) =
|\mathsf w(n+1)|^2 \text{ for } n \geqslant 1,
   \end{align}
where
   \begin{align*}
\alpha_{\mathsf w} := |\mathsf w(0)|^2+ |\mathsf
w(1)|^2.
   \end{align*}
Combined with \eqref{bod-ed}, this implies that
   \begin{align} \label{ogr-norm}
   \begin{minipage}{75ex}
{\em $\cfw\in \ogr{L^2(\mu)}$ if and only if
$\sup_{n\Ge 0} |\mathsf w(n)| < \infty;$ if this is
the case, then}
   $$
\|\cfw\|^2=\max\Big\{\alpha_{\mathsf w}, \, \sup_{n
\Ge 2}|\mathsf w(n)|^2\Big\}.
   $$
   \end{minipage}
   \end{align}
Moreover, by Proposition~\ref{prop-erties}(i), $\cfw$
is bounded from below if and only if
   \begin{align} \label{min-ma}
\min\Big\{\alpha_{\mathsf w}, \, \inf_{n \Ge
2}|\mathsf w(n)|^2\Big\} > 0.
   \end{align}
Concerning the cyclicity of $\cfw$, one can make the
following observation.
   \begin{align} \label{cyc-0}
   \begin{minipage}{70ex}
{\em If $\cfw\in \ogr{L^2(\mu)}$ and $\mathsf w(n)\neq
0$ for all $n\Ge 1,$ then $\cfw$ is cyclic with the
cyclic vector $e_0.$}
   \end{minipage}
   \end{align}
This can be deduced from the equality \eqref{C-phi}
below which is a direct consequence of the definition.
   \begin{align} \label{C-phi}
\cfw e_n &= \begin{cases} \mathsf w(0)e_0 + \mathsf
w(1)e_1 & \text{ if } n=0,
   \\
\mathsf w(n+1)e_{n+1} & \text{ if } n \Ge 1.
   \end{cases}
   \end{align}
Using Proposition~\ref{prop-erties}, we can describe
the Cauchy dual $\cfw'$ of a left-invertible $\cfw$ as
follows.
   \begin{align} \label{prim-w}
   \begin{minipage}{75ex}
{\em If $\cfw \in \ogr{L^2(\mu)}$ and \eqref{min-ma}
holds, then $\cfw'=C_{\phi, \mathsf w'},$ where}
   $$
\mathsf w'(n) =
   \begin{cases}
\frac{\mathsf{w}(n)}{\alpha_{\mathsf w}} & \text{ if }
n=0, 1,
   \\[1ex]
\frac{1}{\overline{{\mathsf w}(n)}} & \text{ if } n
\Ge 2.
   \end{cases}
   $$
   \end{minipage}
   \end{align}
As a consequence, we get
   \begin{align*}
\cfw' e_n =
   \begin{cases}
\frac{\mathsf w(0)}{\alpha_{\mathsf w}}e_0 +
\frac{\mathsf w(1)}{\alpha_{\mathsf w}} e_1 & \text{
if }n=0,
   \\[1ex]
\frac{1}{\overline{{\mathsf w}(n+1)}} e_{n+1} & \text{
if } n \Ge 1.
   \end{cases}
   \end{align*}
   \hfill{$\diamondsuit$}
   \end{exa}
Below, we give necessary and sufficient conditions for
the weighted composition operator $\cfw$ from
Example~\ref{prz-g} to be $2$-isometric. Before we do
this, we define the functions $\xi_n\colon [1,\infty)
\to [1,\infty)$, where $n\in \zbb_+,$ by
   \begin{align} \label{xin}
\xi_n(x) = \sqrt{\frac{1+ (n+1)(x^2-1)}{1+ n(x^2-1)}},
\quad x \in [1,\infty), \, n\in \zbb_+.
   \end{align}
   \begin{pro} \label{2-iso-c}
Let $X$, $\ascr$, $\mu$, $\phi$ and $\mathsf w$ be as
in Example~{\em \ref{prz-g}}. Assume that $\cfw\in
\ogr{L^2(\mu)}.$ Then $\cfw$ is a $2$-isometry if and
only if
   \begin{align} \label{w2}
|\mathsf w(2)| \geq 1, \quad |\mathsf w(n+2) |=
\xi_n(|\mathsf w(2)|), \quad n \in \zbb_+,
   \end{align}
and
   \begin{align} \label{w3}
\begin{cases} |\mathsf w(0)|=1
& \text{ if } \mathsf w(1)=0,
   \\[1ex]
|\mathsf w(2)| = \frac{\sqrt{\alpha_{\mathsf
w}(2-|\mathsf w(0)|^2)-1}}{|\mathsf w(1)|} & \text{ if
} \mathsf w(1) \neq 0.
   \end{cases}
   \end{align}
Moreover, if $\cfw$ is a $2$-isometry, then
   \begin{enumerate}
   \item[(i)] if $\mathsf w(1)\neq 0$ and either
$|\mathsf w(0)|=1$ or $\alpha_{\mathsf w} = 1,$ then
$|\mathsf w(n)|=1$ for all $n \Ge 2,$
   \item[(ii)]
$(\alpha_{\mathsf w}-1)(1-|\mathsf w(0)|^2) \Ge 0.$
   \end{enumerate}
   \end{pro}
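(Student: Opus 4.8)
The plan is to apply the pointwise $2$-isometry criterion of Proposition~\ref{C-phi-2-iso}(ii), namely that $\cfw$ is a $2$-isometry if and only if $1-2\hfw(n)+h_{\phi^2,\mathsf w_{[2]}}(n)=0$ for every $n\in\zbb_+$, and to turn this scalar identity into the weight conditions \eqref{w2} and \eqref{w3}. First I would compute $h_{\phi^2,\mathsf w_{[2]}}$ explicitly. Since $C^2_{\phi,\mathsf w}=C_{\phi^2,\mathsf w_{[2]}}$ with $\mathsf w_{[2]}=\mathsf w\cdot(\mathsf w\circ\phi)$ (see \eqref{hat-w}), arguing as in \eqref{hfw-1} gives $h_{\phi^2,\mathsf w_{[2]}}(n)=\sum_{j\in(\phi^2)^{-1}(\{n\})}|\mathsf w_{[2]}(j)|^2$. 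As $(\phi^2)^{-1}(\{0\})=\{0,1,2\}$ and $(\phi^2)^{-1}(\{n\})=\{n+2\}$ for $n\Ge1$, this yields $h_{\phi^2,\mathsf w_{[2]}}(0)=|\mathsf w(0)|^2\alpha_{\mathsf w}+|\mathsf w(1)|^2|\mathsf w(2)|^2$ and $h_{\phi^2,\mathsf w_{[2]}}(n)=|\mathsf w(n+1)|^2|\mathsf w(n+2)|^2$ for $n\Ge1$. Combined with \eqref{isom-1}, the criterion then splits into a single equation at $n=0$ and a family of equations at $n\Ge1$.

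Writing $a_k=|\mathsf w(k)|^2$, the equations at $n\Ge1$ read $1-2a_k+a_ka_{k+1}=0$ for $k\Ge2$. Each forces $a_k>0$ (otherwise $1=0$) and hence $a_{k+1}=2-1/a_k$. I would solve this by introducing the affine sequence $d_n=1+n(a_2-1)$ and verifying by induction that $a_{n+2}=d_{n+1}/d_n$, the inductive step resting on the second-difference identity $d_n-2d_{n+1}+d_{n+2}=0$, a degree-one instance of \eqref{delt-dela}. This gives $a_{n+2}=\xi_n(|\mathsf w(2)|)^2$, the equality part of \eqref{w2}. To obtain the inequality $|\mathsf w(2)|\Ge1$, I would rule out $a_2<1$: putting $\epsilon_k=a_k-1$, the relation $a_{k+1}=2-1/a_k$ becomes $1/\epsilon_{k+1}=1+1/\epsilon_k$, so $1/\epsilon_k=1/\epsilon_2+(k-2)$; starting from $1/\epsilon_2<-1$ and adding $1$ at each step forces $1/\epsilon_k\in[-1,0)$ for some $k$, i.e.\ $a_k\Le0$, contradicting $a_k>0$. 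Conversely, the same second-difference identity shows that \eqref{w2} implies all the equations at $n\Ge1$.

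The equation at $n=0$, namely $1-2\alpha_{\mathsf w}+|\mathsf w(0)|^2\alpha_{\mathsf w}+|\mathsf w(1)|^2|\mathsf w(2)|^2=0$, then yields \eqref{w3}: if $\mathsf w(1)=0$ it collapses to $(|\mathsf w(0)|^2-1)^2=0$, while if $\mathsf w(1)\neq0$ it can be solved for $|\mathsf w(2)|^2$, and both steps are reversible, giving the equivalence in both directions. For the moreover part (i), assuming $\cfw$ is a $2$-isometry with $\mathsf w(1)\neq0$ and either $|\mathsf w(0)|=1$ or $\alpha_{\mathsf w}=1$, I would substitute into the second case of \eqref{w3} to get $|\mathsf w(2)|^2=1$; then $\xi_n(1)=1$ forces $|\mathsf w(n)|=1$ for all $n\Ge2$. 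For (ii), the decisive observation is the algebraic identity obtained by rearranging the $n=0$ equation, namely $(\alpha_{\mathsf w}-1)(1-|\mathsf w(0)|^2)=|\mathsf w(1)|^2(|\mathsf w(2)|^2-1)$; since $|\mathsf w(2)|\Ge1$ by \eqref{w2}, the right-hand side is nonnegative.

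I expect the main obstacle to be the inequality $|\mathsf w(2)|\Ge1$. The explicit solution of the recurrence and the verification of \eqref{w3} are routine algebra, but one must argue with care that any value $a_2<1$ propagates, through $a_{k+1}=2-1/a_k$, to a nonpositive term $a_k$, which is incompatible with $a_k=|\mathsf w(k)|^2$ being a modulus squared; the reciprocal substitution $1/\epsilon_k=1/\epsilon_2+(k-2)$ is what makes this propagation transparent.
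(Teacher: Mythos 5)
Your proposal is correct and follows essentially the same route as the paper: compute $h_{\phi^2,\mathsf w_{[2]}}$ pointwise, invoke Proposition~\ref{C-phi-2-iso}(ii), split the resulting identity into the circuit equation at $n=0$ (equivalent to \eqref{w3}) and the shift-type recurrence $1-2|\mathsf w(k)|^2+|\mathsf w(k)|^2|\mathsf w(k+1)|^2=0$ for $k\Ge 2$ (equivalent to \eqref{w2}), and then read off the ``moreover'' part from the $n=0$ identity together with $|\mathsf w(2)|\Ge 1$. The only difference is that where the paper cites \cite[Lemma~6.1]{Ja-St} for the equivalence of the recurrence with \eqref{w2}, you prove it directly via the substitution $1/\epsilon_{k+1}=1/\epsilon_k+1$, which is a correct, self-contained replacement of that cited lemma.
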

   \begin{proof}
Arguing as in \eqref{hfw-1} with $(\phi^2,\mathsf
w_{[2]})$ in place of $(\phi,\mathsf w)$ and using
\eqref{hat-w}, we obtain
   \begin{align*}
h_{\phi^{2}, \mathsf w_{[2]}} (n) =
   \begin{cases}
|\mathsf w(n+1)|^2 |\mathsf w(n+2)|^2 & \text{if } n\Ge 1,
   \\[1ex]
|\mathsf w(0)|^4 + |\mathsf w(0)|^2 |\mathsf w(1)|^2 +
|\mathsf w(1)|^2 |\mathsf w(2)|^2 & \text{if } n=0.
   \end{cases}
   \end{align*}
Combined with Proposition~\ref{C-phi-2-iso}(ii), we
deduce that $\cfw$ is a $2$-isometry if and only~if
   \begin{align} \label{wnu-2}
|\mathsf w(1)|^2|\mathsf w(2)|^2 = \alpha_{\mathsf
w}(2-|\mathsf w(0)|^2)-1
   \end{align}
and
   \begin{align} \label{wnu-1}
1 - 2 |\mathsf w(n+2)|^2 + |\mathsf w(n+2)|^2|\mathsf
w(n+3)|^2 = 0, \quad n\in \zbb_+.
   \end{align}
It is easily seen that \eqref{w3} is equivalent to
\eqref{wnu-2}. Observe that the condition
\eqref{wnu-1} holds if and only if the unilateral
weighted shift on $\ell^2(\zbb_+)$ with weights
$\{|w(n+2)|\}_{n=0}^{\infty}$ is $2$-isometric. Hence,
by \cite[Lemma~6.1]{Ja-St}, the condition
\eqref{wnu-1} is equivalent to \eqref{w2}. Using
$|\mathsf w(2)| \geq 1$ and the fact that the
expression under the sign of the square root in
\eqref{w3} is nonnegative, we get the ``moreover''
part.
   \end{proof}
It is well known that any $2$-isometry is expansive
(see \cite[Lemma~1]{R-0}), so we can consider its
Cauchy dual operator. Below, we follow the conventions
\eqref{conv-3}.
   \begin{thm} \label{sub-cauch}
Let $X$, $\ascr$, $\mu$, $\phi$ and $\mathsf w$ be as
in Example~{\em \ref{prz-g}}. Assume that $\cfw\in
\ogr{L^2(\mu)}$ and $\cfw$ is a $2$-isometry. Then the
following conditions hold{\em :}
   \begin{enumerate}
   \item[(i)] the Cauchy dual $C'_{\phi, \mathsf w}$ of $\cfw$ is
subnormal if and only if the sequence $\{h_{\phi^{n},
\mathsf w'_{[n]}}(0)\}_{n=0}^{\infty}$ is a Hausdorff
moment sequence,
   \item[(ii)] the value of $h_{\phi^{n}, \mathsf w'_{[n]}}$ at
$0$ is given by the following explicit formula
   \begin{align} \label{formula-1}
h_{\phi^{n}, \mathsf w'_{[n]}}(0) = \frac{|\mathsf
w(0)|^{2n}}{\alpha_{\mathsf w}^{2n}} +
\sum_{j=0}^{n-1} \frac{|\mathsf
w(0)|^{2(n-j-1)}|\mathsf w(1)|^2}{\alpha_{\mathsf
w}^{2(n-j)}({1+j(|\mathsf w(2)|^2-1))}}, \quad n\in
\zbb_+.
   \end{align}
   \end{enumerate}
   \end{thm}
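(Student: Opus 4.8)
For part~(i), I would invoke Proposition~\ref{subnormal-dual} and exploit the very special structure of the symbol $\phi$ in Example~\ref{prz-g}. The point is that $\phi(n)=0$ only for $n\in\{0,1\}$ and $\phi(n)=n-1$ otherwise, so the only nontrivial branching (and hence the only place where the conditional expectation $\efw$ does anything nondegenerate) sits at the single point $0$. Concretely, I expect that the fibres $\phi^{-n}(\{x\})$ for $x\Ge 1$ are singletons, which forces the sequence $\{h_{\phi^{n},\mathsf w'_{[n]}}(x)\}_{n}$ to be (up to the product structure coming from \eqref{hat-u-k}) of the trivial form $c\,t^{n}$ at those points, hence automatically a Stieltjes moment sequence; thus the genuine content of the moment condition is concentrated at $x=0$. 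I would combine this with the hypothesis that $\cfw$ is a $2$-isometry, which by \cite[Lemma~1]{R-0} is expansive, so that $\|\cfw f\|\Ge\|f\|$ and part~(ii) of Proposition~\ref{subnormal-dual} applies: subnormality of $\cfw'$ forces a \emph{Hausdorff} (not merely Stieltjes) moment sequence. Running the implications both ways with \eqref{trut-uu}, which upgrades a bounded Stieltjes sequence to a Hausdorff one, should give the stated equivalence solely in terms of the sequence at $0$.

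\textbf{Part~(ii): the explicit formula.} This is the computational heart. I would set up the recurrence for $h_{\phi^{n},\mathsf w'_{[n]}}$ supplied by Proposition~\ref{power-dual}, namely
   \begin{align*}
h_{\phi^{n+1},\mathsf w'_{[n+1]}}=\frac{1}{\hfw}\,\efw[\mathsf w']\big(h_{\phi^{n},\mathsf w'_{[n]}}\big)\circ\phi^{-1}\text{ a.e.\ }[\mu],\quad n\in\zbb_+,
   \end{align*}
and evaluate it at the point $0$, using the description of $\mathsf w'$ from \eqref{prim-w} together with $\hfw(0)=\alpha_{\mathsf w}$ and $\hfw(n)=|\mathsf w(n+1)|^{2}$ for $n\Ge 1$ from \eqref{isom-1}. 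Since the conditional expectation with respect to $\phi^{-1}(\ascr)$ is a fibrewise average over $\phi^{-1}(\{0\})=\{0,1\}$, evaluating at $0$ should produce a two-term recurrence that mixes the value of $h_{\phi^{n},\mathsf w'_{[n]}}$ at $0$ (contributing the $|\mathsf w(0)|^{2}/\alpha_{\mathsf w}^{2}$ factor) with its value along the non-branching arm emanating from $1$. The latter contribution is where the factor $1+j(|\mathsf w(2)|^{2}-1)$ must come from: along that arm the relevant $h$-values are governed by the explicit weights $|\mathsf w(n+2)|=\xi_{n}(|\mathsf w(2)|)$ of Proposition~\ref{2-iso-c}, and the telescoping product of the $\xi_{n}$'s collapses the nested radicals in \eqref{xin} into the affine expression $1+n(|\mathsf w(2)|^{2}-1)$. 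I would then solve the resulting scalar recurrence by unwinding it, producing the geometric leading term $|\mathsf w(0)|^{2n}/\alpha_{\mathsf w}^{2n}$ plus the finite sum in \eqref{formula-1}.

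\textbf{The main obstacle.} I expect the delicate step to be verifying the closed form of the $h$-values along the arm starting at $1$ and feeding it correctly into the branching recurrence at $0$. Specifically, one must track how the Cauchy-dual weights $\mathsf w'$, which by \eqref{prim-w} behave as $1/\overline{\mathsf w(n)}$ for $n\Ge 2$, combine through the product \eqref{hat-u-k} and the repeated $1/\hfw$ factors; the bookkeeping of indices in $\mathsf w'_{[n]}=\prod_{j=0}^{n-1}\mathsf w'\circ\phi^{j}$ is easy to get off by one. The cleanest route is probably to prove by induction on $n$ that $h_{\phi^{n},\mathsf w'_{[n]}}(k)=\big(1+k(|\mathsf w(2)|^{2}-1)\big)\big/\big(1+(n+k)(|\mathsf w(2)|^{2}-1)\big)$ (or a similar reciprocal-affine expression) for $k\Ge 1$, using the telescoping identity $\xi_{n}(x)^{2}=\big(1+(n+1)(x^{2}-1)\big)/\big(1+n(x^{2}-1)\big)$ built into \eqref{xin}; once that is in hand, the recurrence at $0$ becomes a first-order linear recurrence $a_{n+1}=\tfrac{|\mathsf w(0)|^{2}}{\alpha_{\mathsf w}^{2}}a_{n}+\tfrac{|\mathsf w(1)|^{2}}{\alpha_{\mathsf w}^{2}}\cdot\tfrac{1}{1+n(|\mathsf w(2)|^{2}-1)}$, whose explicit solution is exactly \eqref{formula-1}. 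I would double-check the base case $h_{\phi^{0},\mathsf w'_{[0]}}=1$ and the convention \eqref{conv-3} for the empty sum at $n=0$ to be sure the formula reads correctly there.
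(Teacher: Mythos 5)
Your overall strategy matches the paper's: for part~(i) reduce everything to the point $0$ by showing the moment condition at each $k\Ge 1$ holds for free (the fibres there being singletons), and use Proposition~\ref{subnormal-dual}(ii) plus expansivity of $2$-isometries for the converse; for part~(ii) your recurrence route is a correct reorganization of the paper's computation. The paper sums $h_{\phi^{n},\mathsf w'_{[n]}}(0)=\sum_{j=0}^{n}|\mathsf w'_{[n]}(j)|^{2}$ over the fibre $\phi^{-n}(\{0\})=\{0,1,\dots,n\}$ directly, whereas you unwind the equivalent first-order recurrence $a_{n+1}=\frac{|\mathsf w(0)|^{2}}{\alpha_{\mathsf w}^{2}}a_{n}+\frac{|\mathsf w(1)|^{2}}{\alpha_{\mathsf w}^{2}}\cdot\frac{1}{1+n(|\mathsf w(2)|^{2}-1)}$; both hinge on the same telescoping identity $\prod_{l=2}^{j}|\mathsf w(l)|^{2}=1+(j-1)(|\mathsf w(2)|^{2}-1)$ supplied by \eqref{xin} and \eqref{w2}, and your recurrence is consistent with \eqref{formula-1}.

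The genuine gap is in part~(i), at the step where you assert that for $k\Ge 1$ the sequence $\{h_{\phi^{n},\mathsf w'_{[n]}}(k)\}_{n}$ is ``of the trivial form $c\,t^{n}$ \dots hence automatically a Stieltjes moment sequence.'' It is not geometric: since $\phi^{-n}(\{k\})=\{k+n\}$, one gets $h_{\phi^{n},\mathsf w'_{[n]}}(k)=\prod_{j=1}^{n}|\mathsf w(k+j)|^{-2}$, a partial product of nonconstant factors, and such a product sequence is not automatically a Stieltjes moment sequence --- by the Berger--Gellar--Wallen theorem that is precisely the assertion that the weighted shift with weights $\{|\mathsf w(k+1+n)|^{-1}\}_{n}$ is subnormal, which is the thing to be proved. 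The paper closes this by noting that the shift with weights $\{|\mathsf w(k+1+n)|\}_{n}$ is $2$-isometric, hence completely hyperexpansive, and invoking Athavale's theorem that the Cauchy dual of a completely hyperexpansive weighted shift is subnormal. Alternatively, you could close it with the closed form you derive anyway (correcting the index shift): $h_{\phi^{n},\mathsf w'_{[n]}}(k)=\bigl(1+(k-1)(|\mathsf w(2)|^{2}-1)\bigr)\big/\bigl(1+(n+k-1)(|\mathsf w(2)|^{2}-1)\bigr)$, which is reciprocal-affine in $n$ and therefore equals $c\int_{0}^{1}t^{n+s-1}\,dt$ for suitable $c,s>0$ (or is constantly $1$ when $|\mathsf w(2)|=1$), hence is even a Hausdorff moment sequence. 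Some such argument must be supplied; ``automatically'' is where the proof as written fails.
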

   \begin{proof}
(i) The ``only if'' part is a direct consequence
Proposition~\ref{subnormal-dual}(ii). In view of
Proposition~\ref{subnormal-dual}(i) to prove the
''if'' part, it suffices to show that the sequence
$\{h_{\phi^{n}, \mathsf w'_{[n]}}(k)\}_{n=0}^{\infty}$
is a Stieltjes moment sequence for every $k\Ge 1.$ Fix
$k\Ge 1.$ Arguing as in \eqref{hfw-1} with $(\phi^{n},
\mathsf w'_{[n]})$ in place of $(\phi,\mathsf w),$ we
verify that
   \begin{align*}
h_{\phi^{n}, \mathsf w'_{[n]}}(k) = |\mathsf
w'_{[n]}(k + n)|^2 \overset{\eqref{hat-w}} =
\prod_{j=1}^{n} |\mathsf w'(k+j)|^2
\overset{\eqref{prim-w}}= \frac{1}{\prod_{j=1}^{n}
|\mathsf w(k+j)|^2}, \quad n\Ge 1.
   \end{align*}
Since the unilateral weighted shift on
$\ell^2(\zbb_+)$ with weights $\{|\mathsf
w(k+1+n)|\}_{n=0}^{\infty}$ is $2$-isometric (see the
proof of Proposition~\ref{2-iso-c}), we deduce
from\footnote{Recall that a $2$-isometry is
$m$-isometric for every integer $m\Ge 2$ (see
\cite[Paper I, \S 1]{Ag-St}), and thus by
\cite[Lemma~1(a)]{R-0} it is completely
hyperexpansive. } \cite[Remark~4]{At} that the
unilateral weighted shift on $\ell^2(\zbb_+)$ with
weights $\big\{\frac{1}{|\mathsf
w(k+1+n)|}\big\}_{n=0}^{\infty}$ is subnormal, which
by Berger-Gellar-Wallen theorem (see \cite{GW,Hal}) is
equivalent to the fact that the sequence
$\{h_{\phi^{n}, \mathsf w'_{[n]}}(k)\}_{n=0}^{\infty}$
is a Stieltjes moment sequence.

(ii) Arguing as in (i), we get
   \allowdisplaybreaks
   \begin{align*}
h_{\phi^{n}, \mathsf w'_{[n]}}(0) & = \sum_{j= 0}^{n}
|\mathsf w'_{[n]}(j)|^2
   \\
&\hspace{-.7ex} \overset{\eqref{hat-u-k}}= \sum_{j=
0}^{n} \; \prod_{l=0}^{j-1} |\mathsf
w'(\phi^{l}(j))|^2 \prod_{l=j}^{n-1} |\mathsf
w'(\phi^l(j))|^2
   \\
& = |\mathsf w'(0)|^{2n} + \sum_{j= 1}^{n} |\mathsf
w'(0)|^{2(n-j)} \prod_{l=1}^{j} |\mathsf w'(l)|^2
         \\
& \hspace{-.7ex} \overset{\eqref{prim-w}} =
\frac{|\mathsf w(0)|^{2n}}{\alpha_{\mathsf w}^{2n}} +
\frac{|\mathsf w(0)|^{2(n-1)} |\mathsf
w(1)|^2}{\alpha_{\mathsf w}^{2n}} + \sum_{j= 2}^{n}
\frac{|\mathsf w(0)|^{2(n-j)}|\mathsf
w(1)|^2}{\alpha_{\mathsf w}^{2(n-j+1)}\prod_{l=2}^{j}
|\mathsf w(l)|^2}
   \\
& \hspace{-.7ex} \overset{\eqref{w2}}= \frac{|\mathsf
w(0)|^{2n}}{\alpha_{\mathsf w}^{2n}} + \frac{|\mathsf
w(0)|^{2(n-1)} |\mathsf w(1)|^2}{\alpha_{\mathsf
w}^{2n}} + \sum_{j= 1}^{n-1} \frac{|\mathsf
w(0)|^{2(n-j-1)}|\mathsf w(1)|^2}{\alpha_{\mathsf
w}^{2(n-j)}(1+j(|\mathsf w(2)|^2 -1))}
   \\
& = \frac{|\mathsf w(0)|^{2n}}{\alpha_{\mathsf
w}^{2n}} + \sum_{j= 0}^{n-1} \frac{|\mathsf
w(0)|^{2(n-j-1)}|\mathsf w(1)|^2}{\alpha_{\mathsf
w}^{2(n-j)}(1+j(|\mathsf w(2)|^2 -1))}, \quad n\Ge 2.
   \end{align*}
   It is a matter of simple verification that
\eqref{formula-1} holds for $n=0,1$ as well.
   \end{proof}
   \begin{cor}
Let $X$, $\ascr$, $\mu$, $\phi$ and $\mathsf w$ be as
in Example~{\em \ref{prz-g}}. Assume that $|\mathsf
w(0)|=|\mathsf w(n)|=1$ for every integer $n \ge 2.$
Then $\cfw\in \ogr{L^2(\mu)},$ $\cfw$ is a
$2$-isometry and $\cfw'$ is subnormal. Moreover,
$\cfw'$ is an isometry if and only if $\cfw$ is an
isometry or, equivalently, if and only if $\mathsf
w(1)=0.$
   \end{cor}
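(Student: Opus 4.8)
The plan is to verify the four assertions in order, leaning entirely on the criteria from Example~\ref{prz-g}, Proposition~\ref{2-iso-c}, Theorem~\ref{sub-cauch} and Proposition~\ref{prop-erties}; once the earlier machinery is in place, everything reduces to arithmetic with the explicit weights. First I would dispose of boundedness and the $2$-isometry property. Since $|\mathsf w(0)|=1$, $|\mathsf w(n)|=1$ for $n\Ge 2$, and $\mathsf w(1)$ is a fixed scalar, we have $\sup_{n\Ge 0}|\mathsf w(n)|<\infty$, so $\cfw\in\ogr{L^2(\mu)}$ by \eqref{ogr-norm}. To apply Proposition~\ref{2-iso-c} I would check \eqref{w2} and \eqref{w3}. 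Condition \eqref{w2} holds because $|\mathsf w(2)|=1\Ge 1$ and, by \eqref{xin}, $\xi_n(1)=1=|\mathsf w(n+2)|$ for every $n$. For \eqref{w3}, when $\mathsf w(1)=0$ the requirement $|\mathsf w(0)|=1$ is part of the hypothesis; when $\mathsf w(1)\neq 0$, using $\alpha_{\mathsf w}=1+|\mathsf w(1)|^2$ one computes $\alpha_{\mathsf w}(2-|\mathsf w(0)|^2)-1=|\mathsf w(1)|^2$, whence the prescribed value of $|\mathsf w(2)|$ is $1$, matching the hypothesis. Thus $\cfw$ is a $2$-isometry.

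For subnormality of $\cfw'$ I would invoke Theorem~\ref{sub-cauch}. Substituting $|\mathsf w(0)|=|\mathsf w(2)|=1$ into \eqref{formula-1} makes the factor $1+j(|\mathsf w(2)|^2-1)$ equal to $1$, so with $\alpha:=\alpha_{\mathsf w}=1+|\mathsf w(1)|^2$ and $q:=\alpha^{-2}\in(0,1]$ the sequence collapses to a geometric sum
\[
h_{\phi^{n},\mathsf w'_{[n]}}(0)=q^{n}+|\mathsf w(1)|^2\sum_{j=0}^{n-1}q^{\,n-j}.
\]
Summing this geometric series (treating the degenerate case $q=1$, i.e.\ $\mathsf w(1)=0$, separately, where the sequence is constantly $1$) produces a closed form $c_0+c_1 q^{n}$ with $c_0,c_1\Ge 0$. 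This is exactly $\int_{[0,1]}t^{n}\,d(c_0\delta_{1}+c_1\delta_{q})$, the moment sequence of a finite positive measure on $[0,1]$, hence a Hausdorff moment sequence. Theorem~\ref{sub-cauch}(i) then yields subnormality of $\cfw'$.

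Finally, the isometry characterization would follow from \eqref{isom-1}, \eqref{bod-ed} and Proposition~\ref{prop-erties}(ii). By \eqref{isom-1} we have $\hfw(n)=1$ for $n\Ge 1$ while $\hfw(0)=\alpha_{\mathsf w}=1+|\mathsf w(1)|^2$; since $\cfw^*\cfw=M_{\hfw}$ by \eqref{bod-ed}, the operator $\cfw$ is an isometry iff $\hfw\equiv 1$ a.e., i.e.\ iff $\mathsf w(1)=0$. As $h_{\phi,\mathsf w'}=\tfrac{1}{\hfw}$ a.e.\ by Proposition~\ref{prop-erties}(ii), the same reasoning gives that $\cfw'$ is an isometry iff $1/\hfw\equiv 1$, i.e.\ again iff $\hfw\equiv 1$; this closes the chain of equivalences. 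The whole argument is a routine specialization of the preceding results, the only steps demanding any attention being the summation of the geometric series and the recognition of the limit as the moments of a two-atom measure, together with the $\mathsf w(1)=0$ (i.e.\ $q=1$) boundary case.
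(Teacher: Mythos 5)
Your proposal is correct and follows essentially the same route as the paper: boundedness via \eqref{ogr-norm}, the $2$-isometry via Proposition~\ref{2-iso-c}, reduction of \eqref{formula-1} to the geometric form $c_0+c_1(\alpha_{\mathsf w}^{-2})^n$ (the paper computes $c_0=\tfrac{1}{2+|\mathsf w(1)|^2}$, $c_1=\tfrac{1+|\mathsf w(1)|^2}{2+|\mathsf w(1)|^2}$, confirming your claimed nonnegativity), and the Cauchy dual isometry criterion via $h_{\phi,\mathsf w'}=1/\hfw$. The only cosmetic difference is that you exhibit the representing measure $c_0\delta_1+c_1\delta_q$ explicitly, which the paper leaves implicit.
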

   \begin{proof}  By \eqref{ogr-norm} and
Proposition~\ref{2-iso-c}, $\cfw\in \ogr{L^2(\mu)}$
and $\cfw$ is a $2$-isometry. Using
Theorem~\ref{sub-cauch}(ii), we see that if $\mathsf
w(1) \neq 0,$ then
   \allowdisplaybreaks
   \begin{align*}
h_{\phi^{n}, \mathsf w'_{[n]}}(0) &=
\frac{1}{\alpha_{\mathsf w}^{2n}} + \sum_{j=0}^{n-1}
\frac{|\mathsf w(1)|^2}{\alpha_{\mathsf w}^{2(n-j)}}
   \\
& = \frac{1}{\alpha_{\mathsf w}^{2n}} + \frac{|\mathsf
w(1)|^2(\alpha_{\mathsf w}^{2n}-1)}{\alpha_{\mathsf
w}^{2n}(\alpha_{\mathsf w}^{2}-1)}
   \\
& =\frac{|\mathsf w(1)|^2}{\alpha_{\mathsf w}^{2}-1} +
\frac{\alpha_{\mathsf w}}{\alpha_{\mathsf w}+1}
(\alpha_{\mathsf w}^{-2})^n
   \\
& =\frac{1}{2+|\mathsf w(1)|^2} + \frac{1+|\mathsf
w(1)|^2}{2+|\mathsf w(1)|^2} (\alpha_{\mathsf
w}^{-2})^n , \quad n\in \zbb_+,
   \end{align*}
which implies that the sequence $\{h_{\phi^{n},
\mathsf w'_{[n]}}(0)\}_{n=0}^{\infty}$ is a Hausdorff
moment sequence. The same is true if $\mathsf w(1)=0$
because then by \eqref{formula-1}, $h_{\phi^{n},
\mathsf w'_{[n]}}(0)=1$ for all $n\in \zbb_+.$
Applying Theorem~\ref{sub-cauch}(i), we see that
$\cfw'$ is subnormal. The first equivalence in the
``moreover'' part is true for arbitrary
left-invertible operators, while the second is a
direct consequence of \cite[(2.22)]{BJJS3} and
\eqref{isom-1}. This completes the proof.
   \end{proof}
   \section{Main example}
The following example shows that the Cauchy dual
subnormality problem has a negative solution even in
the class of cyclic operators.
   \begin{exa} \label{exm1.6} (Example~\ref{prz-g}
continued). Let $X$, $\ascr$, $\mu$, $\phi$ and
$\mathsf w$ be as in Example~\ref{prz-g}. To achieve
the main purpose of this paper, we will begin by
specifying the weight $\mathsf w.$ Let $x$ is any
positive real number and let $\mathsf w$ be the weight
function constructed as follows (for notational
convenience the dependence of $\mathsf w$ on $x$ will
not be expressed explicitly). Set
   \begin{align}  \label{wydz-w2}
\mathsf w(0)=\frac{1}{\sqrt{2}}, \quad \mathsf
w(1)=\sqrt{\frac{1}{2} + x}.
   \end{align}
Then clearly $\alpha_{\mathsf w}(2-\mathsf w(0)^2)-1
> 0$ and
   \begin{align} \label{wydz-w3}
\mathsf w(2):= \frac{\sqrt{\alpha_{\mathsf
w}(2-\mathsf w(0)^2)-1}}{\mathsf w(1)} =
\sqrt{\frac{1+3x}{1+2x}} > 1.
   \end{align}
Set
   \begin{align} \label{wydz-w4}
\mathsf w(n+2):= \xi_n(\mathsf w(2)), \quad n \Ge 1,
   \end{align}
where the functions $\xi_n$ are as in \eqref{xin}. It
follows from the definition of $\mathsf w$ and
\eqref{xin} that $\sup_{n\Ge 0} \mathsf w(n) <
\infty,$ so by \eqref{ogr-norm}, $\cfw\in
\ogr{L^2(\mu)}.$ Using Proposition~\ref{2-iso-c}, we
deduce that $\cfw$ is a $2$-isometry. It follows from
\eqref{formula-1}~ that
   \begin{align*}
h_{\phi^{n}, \mathsf w'_{[n]}}(0) =
\frac{1}{2^n(1+x)^{2n}} + \sum_{j=0}^{n-1}
\frac{1+2x}{2^{n-j}(1+x)^{2(n-j)}(1+j(\mathsf
w(2)^2-1))}, \quad n\in \zbb_+.
   \end{align*}
By \eqref{wydz-w3}, we have
   \begin{align*}
1+j(\mathsf w(2)^2-1) = \frac{1+(j+2)x}{1+2x},
   \end{align*}
which yields
   \begin{align} \label{now-hi}
h_{\phi^{n}, \mathsf w'_{[n]}}(0) =
\frac{1}{2^n(1+x)^{2n}} \bigg( 1 + (1+2x)^2
\sum_{j=0}^{n-1} \frac{2^{j}(1+x)^{2j}}{1+(j+2)x}
\bigg), \quad n\in \zbb_+.
   \end{align}
   \hfill{$\diamondsuit$}
   \end{exa}
Below we will continue the necessary preparations to
achieve the main goal of this paper. For each $n\in
\zbb_+$, we define the real valued function $\omega_n$
on $\varOmega_n:= (-\frac{1}{n+1},\infty)$ by
   \begin{align*}
\omega_n(x) = \frac{1}{2^n(1+x)^{2n}} \bigg( 1 +
(1+2x)^2 \sum_{j=0}^{n-1}
\frac{2^{j}(1+x)^{2j}}{1+(j+2)x} \bigg), \quad x \in
\varOmega_n, \, n \in \zbb_+.
   \end{align*}
Clearly, the following holds
   \begin{align} \label{omg-ha}
\omega_{n}(x) = \frac{1}{2^n(1+x)^{2n}} \left( 1 +
(1+2x)^2 S_n(x) \right), \quad \quad x \in
\varOmega_n, \, n\in \zbb_+,
   \end{align}
where
   \begin{align*}
S_n(x) := \sum_{j=0}^{n-1}
\frac{2^{j}(1+x)^{2j}}{1+(j+2)x}, \quad x \in
\varOmega_n,\, n\in \zbb_+.
   \end{align*}
Set
   \begin{align} \label{D-m-epsilon-0}
D_m(x) = \sum_{n=0}^m (-1)^n \binom{m}{n} \omega_n(x),
\quad x \in \varOmega_m, \, m \in \zbb_+.
   \end{align}
Then
   \begin{align} \label{D-m-epsilon}
D^{(l)}_m(x) = \sum_{n=0}^m (-1)^n \binom{m}{n}
\omega^{(l)}_n(x), \quad x \in \varOmega_m, \, m \in
\zbb_+, \, l \in \zbb_+,
   \end{align}
where $D^{(l)}_m$ (resp.\ $\omega^{(l)}_n$) stands for
the $l$-th derivative of $D_m$ (resp.\ $\omega_n$).
Applying the general Leibniz rule, we see that the
$l$-th derivative $S_n^{(l)}$ of $S_n$ is given by
   \begin{align*}
S_n^{(l)}(x) = \sum_{j=0}^{n-1} {2^{j}} \sum_{k=0}^{l}
\binom{l}{k} \Big(\frac{1}{1+(j+2)x}\Big)^{(k)}
((1+x)^{2j})^{(l-k)}, \quad x \in \varOmega_n, \, l, n
\in \zbb_+.
   \end{align*}
In particular, for every $n\in \zbb_+$ we have
   \allowdisplaybreaks
   \begin{align}  \notag
S_n(0) &= 2^n-1,
   \\  \notag
S_n^{(1)}(0) &= n 2^n - 4(2^n-1),
   \\ \notag
S_n^{(2)}(0) &= 2n^2 2^n -10n 2^n+ 24(2^n-1),
   \\ \notag
S_n^{(3)}(0) &= 2 n^3 2^n - 30 n^2 2^n +100 n 2^n -
192 (2^n -1),
   \\  \label{9}
S_n^{(4)}(0) &= 8 n^4 2^n - 56 n^3 2^n + 460 n^2 2^n -
1324 n 2^n + 2208 (2^n -1).
   \end{align}
Now we compute the $l$th derivative $\omega^{(l)}_n$
of $\omega_n.$
   \begin{lem} \label{lem-1.6}
The following conditions are valid{\em :}
   \begin{enumerate}
   \item[(i)] if $n\in \zbb_+,$
$l$ is a positive integer and $x$ varies over
$\varOmega_n,$ then $($see \eqref{conv-3}$)$
   \allowdisplaybreaks
   \begin{align}  \notag
& \omega^{(l)}_n(x) = \Big( \frac{1}{(1+x)^{2n}}
\Big)^{(l)} \left[\frac{1 + (1+2x)^2 S_n(x)
}{2^n}\right]
   \\ \notag
&\hspace{4ex} + l \Big( \frac{1}{(1+x)^{2n}}
\Big)^{(l-1)} \left[ \frac{2^2(1+2x) S_n(x) + (1+2x)^2
S_n^{(1)}(x)}{2^n} \right]
   \\ \notag
&\hspace{4ex}+ \sum_{i = 2}^l \binom{l}{i} \Big(
\frac{1}{(1+x)^{2n}} \Big)^{(l-i)}
   \\  \label{zus-as}
& \hspace{10ex}\times \left[\frac{2^3 \binom{i}{2}
S_n^{(i-2)}(x) + 2^2 \binom{i}{1} (1+2x)
S_n^{(i-1)}(x) + (1+2x)^2 S_n^{(i)}(x)}{2^n} \right],
   \end{align}
   \item[(ii)] if $l \in \{ 0, 1, 2, 3\}$ and $m\in \{4,5,6,\ldots\},$
then $D_m^{(l)}(0)=0.$
   \end{enumerate}
   \end{lem}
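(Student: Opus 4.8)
The plan is to establish (i) by a direct application of the general Leibniz rule, and then to deduce (ii) by showing that, for each fixed $l\in\{0,1,2,3\}$, the quantity $\omega_n^{(l)}(0)$ is a polynomial in $n$ of degree at most $l$; the conclusion then follows from \eqref{delt-dela} and \eqref{D-m-epsilon}. For (i), I would write $\omega_n=f_n\cdot g_n$ with $f_n(x)=(1+x)^{-2n}$ and $g_n(x)=2^{-n}\big(1+(1+2x)^2 S_n(x)\big)$, and apply the general Leibniz rule to this product. To compute $g_n^{(i)}$, I would apply the Leibniz rule once more to $(1+2x)^2 S_n(x)$, using that the only nonzero derivatives of $(1+2x)^2$ are $(1+2x)^2$, $2^2(1+2x)$ and $2^3$, of orders $0,1,2$. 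Isolating the contributions of orders $i=0$ and $i=1$, where the constant summand $1$ and the lower-order $S_n$-derivatives enter, from those of order $i\Ge 2$ yields exactly the three groups of terms in \eqref{zus-as}, the convention \eqref{conv-3} handling the terms with negative differentiation order.

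For (ii), I would start from \eqref{D-m-epsilon}, so that $D_m^{(l)}(0)=\sum_{n=0}^m(-1)^n\binom{m}{n}\omega_n^{(l)}(0)$, and reduce everything to the claim that $n\mapsto\omega_n^{(l)}(0)$ is a polynomial of degree at most $l$. Evaluating the Leibniz expansion at $x=0$ gives $\omega_n^{(l)}(0)=\sum_{i=0}^l\binom{l}{i}f_n^{(l-i)}(0)\,g_n^{(i)}(0)$. Here $f_n^{(j)}(0)=(-1)^j\prod_{s=0}^{j-1}(2n+s)$ is visibly a polynomial in $n$ of degree $j$. The heart of the matter is to show that $g_n^{(i)}(0)$ is a polynomial in $n$ of degree $i$ for $i\in\{0,1,2,3\}$: specializing the formula from (i) at $x=0$, where $(1+2x)^2=1$ and $2^2(1+2x)=2^2$, expresses $g_n^{(i)}(0)$ as a fixed linear combination of $S_n^{(k)}(0)/2^n$ with $k\Le i$. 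Using \eqref{9}, each $S_n^{(k)}(0)$ has the shape $P_k(n)2^n+c_k(2^n-1)$ with $P_k$ a polynomial of degree $k$ having no constant term and $c_k$ a constant, so that $S_n^{(k)}(0)/2^n=P_k(n)+c_k-c_k2^{-n}$; the arithmetic of the binomial coefficients in \eqref{zus-as} makes the coefficients of $2^{-n}$ sum to zero, leaving a genuine polynomial. A short computation then gives $g_n^{(0)}(0)=1$, $g_n^{(1)}(0)=n$, $g_n^{(2)}(0)=2n(n-1)$ and $g_n^{(3)}(0)=2n(n-1)(n-2)$.

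Consequently each product $f_n^{(l-i)}(0)\,g_n^{(i)}(0)$ is a polynomial in $n$ of degree $(l-i)+i=l$, so $\omega_n^{(l)}(0)$ is a polynomial in $n$ of degree at most $l\Le 3$. Since $m\Ge 4>l$, applying \eqref{delt-dela} yields $D_m^{(l)}(0)=0$, as required. The main obstacle is precisely the cancellation of the $2^{-n}$ terms in $g_n^{(i)}(0)$: if these nonpolynomial contributions did not cancel, then $\omega_n^{(l)}(0)$ would fail to be a polynomial in $n$ and \eqref{delt-dela} could not be invoked. Verifying this cancellation rests on the precise constants recorded in \eqref{9}, and this finite but delicate bookkeeping is where the real work lies.
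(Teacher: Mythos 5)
Your proposal is correct and follows essentially the same route as the paper: part (i) by applying the Leibniz rule twice to the product $(1+x)^{-2n}\cdot 2^{-n}\bigl(1+(1+2x)^2S_n(x)\bigr)$ and isolating the $i=0,1$ terms, and part (ii) by checking via \eqref{9} that the bracketed factors at $x=0$ are polynomials in $n$ of degree at most $i$ (your explicit values $1$, $n$, $2n(n-1)$, $2n(n-1)(n-2)$ are correct), so that $\omega_n^{(l)}(0)$ is a polynomial in $n$ of degree at most $l$ and \eqref{delt-dela} with \eqref{D-m-epsilon} finishes the argument.
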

   \begin{proof}
(i) Applying the general Leibniz rule twice, we get
   \allowdisplaybreaks
   \begin{align*}
2^n \omega^{(l)}_n(x) & \overset{\eqref{omg-ha}}=
\sum_{i=0}^{l} \binom{l}{i} \Big( \frac{1}{(1+x)^{2n}}
\Big)^{(l-i)} \Big( 1 + (1+2x)^2 S_n(x) \Big)^{(i)}
   \\
&= \Big( \frac{1}{(1+x)^{2n}} \Big)^{(l)} \Big( 1 +
(1+2x)^2 S_n(x) \Big)
   \\
&\hspace{5ex} + \sum_{i=1}^{l} \binom{l}{i} \Big(
\frac{1}{(1+x)^{2n}} \Big)^{(l-i)} \Big((1+2x)^2
S_n(x) \Big)^{(i)}
   \\
&= \Big( \frac{1}{(1+x)^{2n}} \Big)^{(l)} \Big( 1 +
(1+2x)^2 S_n(x) \Big)
   \\
&\hspace{5ex} + l \Big( \frac{1}{(1+x)^{2n}}
\Big)^{(l-1)}\Big( 2^2(1+2x) S_n(x) + (1+2x)^2
S_n^{(1)}(x) \Big)
   \\
&\hspace{5ex} + \sum_{i=2}^{l} \binom{l}{i} \Big(
\frac{1}{(1+x)^{2n}} \Big)^{(l-i)} \sum_{m=0}^{i}
\binom{i}{m} ((1+2x)^2)^{(m)} S_n^{(i-m)}(x),
   \end{align*}
which implies that \eqref{zus-as} holds for $l\Ge 2,$
$n\in \zbb_+$ and $x\in \varOmega_n.$ It is a matter
of routine to verify that \eqref{zus-as} holds for
$l=1,$ $n\in \zbb_+$ and $x\in \varOmega_n$ as well.
This yields~(i).

(ii) Using \eqref{9} we verify that the factors in the
square brackets appearing in \eqref{zus-as} (the third
one for $i=2,3$) when calculated at $0$ are of the
form:
   \allowdisplaybreaks
   \begin{gather} \notag
\frac{S_n(0) +1}{2^n} = p_0(n), \text{ where $p_0\in
\cbb[z]$ is of degree $0$},
   \\ \notag
\frac{4 S_n(0) + S_n^{(1)}(0)}{2^n} = p_1(n), \text{
where $p_1\in \cbb[z]$ is of degree $1$},
   \\ \notag
\frac{8S_n(0) + 8S^{(1)}_n(0) + S_n^{(2)}(0)}{2^n} =
p_2(n), \text{ where $p_2\in \cbb[z]$ is of degree
$2$},
   \\ \label{do-10}
\frac{24 S^{(1)}_n(0) + 12 S^{(2)}_n(0) +
S_n^{(3)}(0)}{2^n} = p_3(n), \text{ where $p_3\in
\cbb[z]$ is of degree $3$}.
   \end{gather}
This together with \eqref{zus-as} implies that
$\omega^{(l)}_n(0)$ is a polynomial in $n$ of degree
at most $l$ whenever $l\in \{0,1,2,3\}.$ Therefore,
(ii) is a direct consequence of \eqref{delt-dela} and
\eqref{D-m-epsilon}.
   \end{proof}
   \begin{figure}[t]
   \centering
\includegraphics[width=0.9\textwidth]{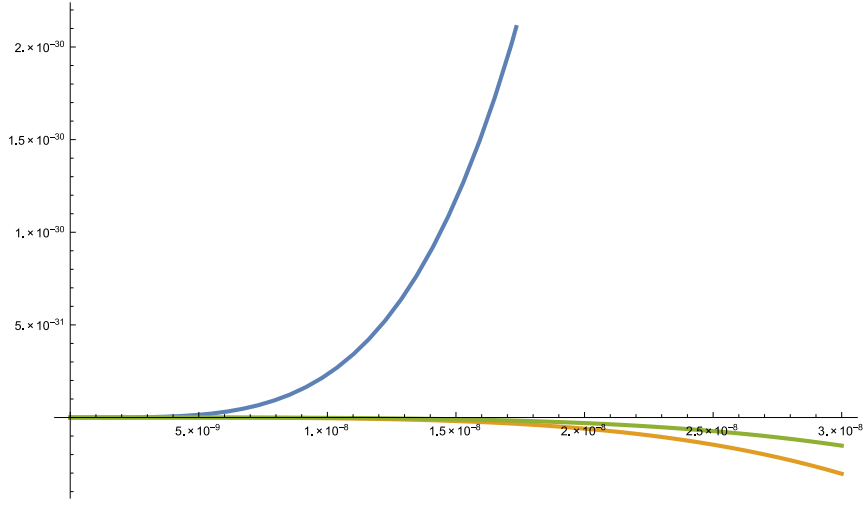}
\caption{Plots of $D_m(x)$ for $m = 4, 5,6$ showing
that $D_m(x)$ takes negative values in a neighborhood
of $x=0$ for $m = 5, 6$ and remains nonnegative for $m
= 4$.} \label{fig1}
   \end{figure}
   \begin{lem} \label{kontr-cyc0}
For every integer $m\Ge 5,$ there exists
$\varepsilon_m \in (0,\infty)$ such that $D_m(x) < 0$
for every $x\in (0,\varepsilon_m).$
   \end{lem}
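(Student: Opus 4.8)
The plan is to locate the first non-vanishing derivative of $D_m$ at the origin and read off its sign. By Lemma~\ref{lem-1.6}(ii) we already know that $D_m^{(l)}(0)=0$ for $l\in\{0,1,2,3\}$ and every $m\Ge 5$, so the Maclaurin expansion of $D_m$ begins (at earliest) with the quartic term. Everything therefore hinges on computing $D_m^{(4)}(0)$ and verifying that it is strictly negative.

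First I would evaluate $\omega_n^{(4)}(0)$ by specialising \eqref{zus-as} to $l=4$ and setting $x=0$. The summands indexed by $i\in\{0,1,2,3\}$ carry exactly the factors $p_0(n),\ldots,p_3(n)$ recorded in \eqref{do-10}, each of which is a genuine polynomial in $n$; since the values $\big(\tfrac{1}{(1+x)^{2n}}\big)^{(k)}|_{x=0}=(-1)^k\,2n(2n+1)\cdots(2n+k-1)$ are also polynomials in $n$, these four contributions are polynomials in $n$ of degree at most $4$. The only new ingredient is the summand $i=4$, whose bracket at $x=0$ equals $\big(48\,S_n^{(2)}(0)+16\,S_n^{(3)}(0)+S_n^{(4)}(0)\big)/2^n$. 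Substituting \eqref{9}, the parts of the form $(\text{polynomial})\cdot 2^n$ divide cleanly to leave a polynomial of degree $4$, while the residual $(2^n-1)$-terms combine with coefficient $48\cdot 24-16\cdot 192+2208=288$, i.e.\ they contribute $288(2^n-1)$ and hence $-288\cdot 2^{-n}$ after dividing by $2^n$. Consequently
   \begin{align*}
\omega_n^{(4)}(0)=P(n)-288\cdot 2^{-n}, \quad n\in\zbb_+,
   \end{align*}
where $P\in\cbb[z]$ has degree at most $4$.

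With this in hand the alternating sum is immediate. By \eqref{D-m-epsilon} and \eqref{delt-dela}, for $m\Ge 5$ the polynomial part contributes nothing, since $\deg P\Le 4<m$; hence, by the binomial theorem,
   \begin{align*}
D_m^{(4)}(0)=-288\sum_{n=0}^m(-1)^n\binom{m}{n}2^{-n}
=-288\Big(1-\tfrac{1}{2}\Big)^{m}=-\frac{288}{2^{m}}<0.
   \end{align*}
Finally I would invoke Taylor's formula on $\varOmega_m$: since $D_m$ is smooth near $0$ and $D_m^{(l)}(0)=0$ for $l\in\{0,1,2,3\}$ while $D_m^{(4)}(0)<0$, we have $D_m(x)=\tfrac{1}{24}D_m^{(4)}(0)\,x^4+o(x^4)$ as $x\to 0$. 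Because the leading coefficient is negative and $x^4>0$ for $x\neq 0$, there is some $\varepsilon_m\in(0,\infty)$ with $D_m(x)<0$ on $(0,\varepsilon_m)$, as required.

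The delicate point is the middle step: one must carry out the combination of \eqref{9} accurately enough to see that the sole non-polynomial residue in $\omega_n^{(4)}(0)$ is the single term $-288\cdot 2^{-n}$, all the lower-order brackets being exact polynomials by \eqref{do-10} (this is precisely why their $(2^n-1)$-contributions cancel). Once that is established, the remainder is routine bookkeeping with \eqref{delt-dela} and the binomial theorem.
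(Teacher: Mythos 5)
Your proposal is correct and follows essentially the same route as the paper: it isolates $\omega_n^{(4)}(0)$ as a degree-$\Le 4$ polynomial in $n$ plus the residue $-288\cdot 2^{-n}$, kills the polynomial part via \eqref{delt-dela}, evaluates $D_m^{(4)}(0)=-288/2^m<0$ by the binomial theorem, and concludes with Taylor's theorem using Lemma~\ref{lem-1.6}(ii). The coefficient computation $48\cdot 24-16\cdot 192+2208=288$ matches the paper's.
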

   \begin{proof}
It follows from \eqref{9} that that the third factor
in the square brackets appearing in \eqref{zus-as} for
$i=4$ when calculated at $0$ is of the form
   \begin{align*}
\frac{48 S_n^{(2)}(0) + 16 S_n^{(3)}(0) +
S_n^{(4)}(0)}{2^n} = p_4(n)-\frac{288}{2^n}, \quad
x\in \varOmega_n, \, n\in \zbb_+,
   \end{align*}
where $p_4\in \cbb[z]$ is of degree $4.$ Combined with
\eqref{do-10} and \eqref{zus-as}, this yields
   \allowdisplaybreaks
   \begin{align*} &
\omega^{(4)}_n(0) = p_0(n) \left( \frac{1}{(1+x)^{2n}}
\right)^{(4)}\Big|_{x=0} + 4 p_1(n) \left(
\frac{1}{(1+x)^{2n}} \right)^{(3)}\Big|_{x=0}
      \\
& \hspace{10ex}+ 6 p_2(n) \left( \frac{1}{(1+x)^{2n}}
\right)^{(2)} \Big|_{x=0} + 4 p_3(n) \left(
\frac{1}{(1+x)^{2n}} \right)^{(1)}\Big|_{x=0}
   \\
&\hspace{10ex} + \Big(p_4(n)-\frac{288}{2^n}\Big)
\Big(\frac{1}{(1+x)^{2n}} \Big)^{(0)}\Big|_{x=0},
\quad n\in \zbb_+.
   \end{align*}
Hence, $\omega^{(4)}_n(0) + \frac{288}{2^n}$ is a polynomial in $n$
of degree at most $4.$ This fact, together with
\eqref{delt-dela} and \eqref{D-m-epsilon} implies that
   \begin{align*}
D_m^{(4)}(0) =-288 \sum_{n=0}^{m} (-1)^{n}
\binom{m}{n} 2^{-n}= - \frac{288}{2^m} < 0, \quad m
\ge 5.
   \end{align*}
Now applying Lemma~\ref{lem-1.6}(ii) and Taylor's
theorem to $D_m$ (see \cite[Theorem~5.15]{Rud76} with
$n=4$ and $\alpha=0$) completes the proof.
   \end{proof}
Concerning Lemma~ \ref{kontr-cyc0}, the reader is
referred to Figure~\ref{fig1}. Now we are ready to
state the main result of the paper.
   \begin{thm} \label{kontr-cyc}
Let $X$, $\ascr$, $\mu$, $\phi$ and $\mathsf w$ be as
in Example~{\em \ref{exm1.6}} $($with $\mathsf w$
given by \eqref{wydz-w2}-\eqref{wydz-w4}$).$ Then
there exists $\varepsilon \in (0,\infty)$ such that
for every $x \in (0, \varepsilon),$ $\cfw\in
\ogr{L^2(\mu)}$ and $\cfw$ is a cyclic $2$-isometry
such that $\cfw'$ is not subnormal.
   \end{thm}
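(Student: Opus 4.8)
The plan is to establish the three assertions—boundedness, being a cyclic $2$-isometry, and non-subnormality of the Cauchy dual—one at a time, leaning on what has already been assembled. Boundedness $\cfw\in\ogr{L^2(\mu)}$ and the $2$-isometry property require nothing new: both are recorded in Example~\ref{exm1.6}, where $\sup_{n\Ge 0}\mathsf w(n)<\infty$ is noted (so \eqref{ogr-norm} applies) and Proposition~\ref{2-iso-c} is invoked; these hold for every $x>0$. For cyclicity I would appeal to \eqref{cyc-0}, for which it suffices to verify $\mathsf w(n)\neq 0$ for all $n\Ge 1$. This is immediate from the construction: $\mathsf w(1)=\sqrt{1/2+x}>0$, $\mathsf w(2)=\sqrt{(1+3x)/(1+2x)}>1$ by \eqref{wydz-w3}, and $\mathsf w(n+2)=\xi_n(\mathsf w(2))\Ge 1$ for $n\Ge 1$ since each $\xi_n$ maps $[1,\infty)$ into $[1,\infty)$ (see \eqref{xin}). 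Hence $\cfw$ is cyclic with cyclic vector $e_0$ for every $x>0$.

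The substantive point is the failure of subnormality of $\cfw'$. By Theorem~\ref{sub-cauch}(i), $\cfw'$ is subnormal if and only if $\{h_{\phi^{n},\mathsf w'_{[n]}}(0)\}_{n=0}^{\infty}$ is a Hausdorff moment sequence. The computation \eqref{now-hi} identifies $h_{\phi^{n},\mathsf w'_{[n]}}(0)$ with $\omega_n(x)$, the very functions entering $D_m(x)=\sum_{n=0}^{m}(-1)^n\binom{m}{n}\omega_n(x)$ in \eqref{D-m-epsilon-0}. Consequently the Hausdorff criterion \eqref{haus-m-t}, specialized to $j=0$, demands $D_m(x)\Ge 0$ for all $m\in\zbb_+$. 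I would therefore take $\varepsilon:=\varepsilon_5$ furnished by Lemma~\ref{kontr-cyc0} (any index $m\Ge 5$ works; $m=5$ is the cleanest choice). For $x\in(0,\varepsilon)$ that lemma gives $D_5(x)<0$, so the $(m,j)=(5,0)$ instance of \eqref{haus-m-t} is violated, the sequence $\{h_{\phi^{n},\mathsf w'_{[n]}}(0)\}_{n=0}^{\infty}$ fails to be a Hausdorff moment sequence, and Theorem~\ref{sub-cauch}(i) yields that $\cfw'$ is not subnormal. Assembling the three parts proves the theorem on $(0,\varepsilon)$.

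The genuine obstacle has in fact already been discharged in Lemma~\ref{kontr-cyc0}: the heart of the matter is forcing $D_m(x)$ below zero on a right neighborhood of $0$ for $m\Ge 5$, which rests on the vanishing $D_m^{(l)}(0)=0$ for $l\in\{0,1,2,3\}$ (Lemma~\ref{lem-1.6}(ii)) combined with the strictly negative fourth derivative $D_m^{(4)}(0)=-288/2^m$, fed into Taylor's theorem. At the level of assembling Theorem~\ref{kontr-cyc} itself, the only steps needing care are the clean identification $h_{\phi^{n},\mathsf w'_{[n]}}(0)=\omega_n(x)$ via \eqref{now-hi} and the observation that a single slice ($j=0$) of the full Hausdorff condition already suffices to obstruct subnormality; both are routine.
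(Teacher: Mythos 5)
Your proposal is correct and follows essentially the same route as the paper's own proof: boundedness, the $2$-isometry property and cyclicity via Example~\ref{exm1.6} and \eqref{cyc-0}, then non-subnormality by combining \eqref{now-hi}, \eqref{D-m-epsilon-0}, the $j=0$ slice of \eqref{haus-m-t} applied to $D_5(x)<0$ from Lemma~\ref{kontr-cyc0}, and Theorem~\ref{sub-cauch}(i). The only difference is that you spell out the verification $\mathsf w(n)\neq 0$ for $n\Ge 1$ explicitly, which the paper leaves implicit.
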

   \begin{proof}
It follows from \eqref{cyc-0} and Example~\ref{exm1.6}
that if $x\in (0,\infty),$ then $\cfw\in
\ogr{L^2(\mu)}$ and $\cfw$ is a cyclic $2$-isometry.
In turn, by Lemma~\ref{kontr-cyc0}, there exists
$\varepsilon \in (0,\infty)$ such that $D_5(x) < 0$
for every $x \in (0, \varepsilon).$ Applying
\eqref{haus-m-t}, \eqref{D-m-epsilon-0} and
\eqref{now-hi}, we deduce that whenever $x \in (0,
\varepsilon),$ $\{h_{\phi^{n}, \mathsf
w'_{[n]}}(0)\}_{n=0}^{\infty}$ is not a Hausdorff
moment sequence and consequently, by
Theorem~\ref{sub-cauch}(i), $\cfw'$ is not subnormal.
   \end{proof}
We conclude the paper with the following observation.
   \begin{rem} \label{conc-rem}
It is worth noting that if the parameter $x$ in
Example~\ref{exm1.6} equals $0$, then $\cfw\in
\ogr{L^2(\mu)}$ and, by \eqref{isom-1},
\eqref{wydz-w2}, \eqref{wydz-w3} and \eqref{wydz-w4}
with $x=0$, $\hfw(n) = 1$ for all $n\in \zbb_+,$ which
implies that $\cfw$ is an isometry\footnote{\;In fact,
in view of \eqref{C-phi},
$\big(\cfw(L^2(\mu))\big)^{\perp} = \cbb \cdot (e_{0}
- e_{1}),$ so $\cfw$ is not unitary.} (see
\cite[(2.22)]{BJJS3}). It follows that $\cfw'$ is an
isometry and, as such, is subnormal (because each
isometry has a unitary extension possibly in a larger
Hilbert space, see e.g.,
\cite[Proposition~I.2.3]{SF70}). Therefore,
Theorem~\ref{kontr-cyc} breaks down at the point
$x=0.$
   \end{rem}
   
   \end{document}